\renewcommand{\section}{\@startsection{section}{1}{0pt}{20pt}{6pt}{\large\bfseries}}
\numberwithin{equation}{section}
\theoremstyle{plain}
  \newtheorem{theorem}{Theorem}[section]
  \newtheorem{lemma}[theorem]{Lemma}
   \newtheorem{corollary}[theorem]{Corollary}
\theoremstyle{definition}
\theoremstyle{remark}
\newtheorem{remark}[theorem]{Remark}
\newcommand{\e}{{\mathbf{e}}}
\newcommand{\E}{\mathbb{E}}
\renewcommand{\P}{{\mathbb{P}}}
\newcommand{\R}{\mathbb{R}}
\newcommand{\C}{\mathbb{C}}
\newcommand{\N}{\mathbb N}
\newcommand{\lbb}{\left [}
\newcommand{\rbb}{\right ]}
\newcommand{\labs}{\left |}
\newcommand{\rabs}{\right |}
\newcommand{\A}{\mathcal{A}}
\newcommand{\G}{\mathcal{G}}
\renewcommand{\P}{\mathcal{P}}
\newcommand{\Id}[1]{{{\mathbb{I}}}_{\{#1\}}}
\newcommand{\M}{{\rm{M}}}
\title{On Doney's striking factorization of the arc-sine law}
\author{L.~Alili}
\address{Department of Statistics, The University of Warwick, CV4 7AL, Coventry, UK}
\email{L.Alili@warwick.ac.uk}
\author{C.~Bartholm\'e}
\address{Universit\'e de Luxembourg}
\email{carine.bartholme@gmail.com}
\author{L. Chaumont}
\address{LAREMA UMR CNRS 6093, Universit\'e d’Angers, 2, Bd Lavoisier,
Angers Cedex 01, 49045, France.}
\email{loic.chaumont@univ-angers.fr}
\author{P.~Patie}
\address{School of Operations Research and Information Engineering, Cornell University, Ithaca, NY 14853.}
\email{pp396@cornell.edu}
\author{M. Savov}
\address{Institute of Mathematics and Informatics, Bulgarian Academy of Sciences,  "Akad. Georgi Bonchev" bl. 8, Sofia 1113, Bulgaria.}
\email{mladensavov@math.bas.bg}
\author{S. Vakeroudis}
\address{Department of Mathematics, Track: Statistics and Actuarial-Financial Mathematics, University of the
Aegean, Vourliotis Building, Office: Y5, 83200 Karlovasi, Samos, Greece.}
\email{stavros.vakeroudis@gmail.com}
\begin{document}

\maketitle

\begin{abstract}
In \cite{D87}, R. Doney identifies a striking factorization of the arc-sine law in terms of the suprema of two independent stable processes of the same index
by an elegant random walks approximation. In this paper, we provide an alternative proof and a generalization of this factorization based on the  theory recently developed  for  the exponential functional of  L\'evy processes. As a by-product, we  provide some interesting distributional properties for these variables and also some new examples of the factorization of the arc-sine law. 
\end{abstract}

\section{Introduction}
Let $\M_{\rho}=\sup_{0\leq t \leq 1} X_{t}$ and $\widehat{\M}_{\rho}=\sup_{0\leq t \leq 1} -\widetilde{X}_{t}$ where $X= (X_{t})_{t\geq0}$ and $ \widetilde{X}=(\widetilde{X}_{t})_{t\geq 0}$ are two independent copies of a stable process of index $\alpha \in (0, 2)$ and positivity parameter $\rho \in (0,1)$.  Doney \cite[Theorem 3]{D87} proved the following factorization of the  arc-sine random variable $\A_{\rho}$ of parameter $\rho$
	\begin{equation} \label{id-Doney}
	        \frac{\M_{\rho}^{\alpha}}{\M_{\rho}^{\alpha}+\widehat{\M}_{\rho}^{\alpha}} \stackrel{\left(d\right)}{=} \A_{\rho}
	\end{equation}
where $\stackrel{\left(d\right)}{=}$ stands for the identity in distribution, and the law of $\A_{\rho} $ is absolutely continuous with a density given by
\begin{equation*}
\frac{\sin(\pi \rho)}{\pi} x^{\rho-1} (1-x)^{-\rho}, \ x \in \left( 0,1 \right).
\end{equation*}
The distributional identity \eqref{id-Doney} is remarkable because the law of the supremum of a stable process is usually a very complicated object whereas the arc-sine law has  a simple distribution. In recent years, the law of $\M_{\rho}$ has been the interest of many researchers, see e.g.~\cite{DS, K11,KP13,P12} where we can find series or Mellin-Barnes integral representations for the density of the supremum of a stable process valid for some set of parameters $\left(\alpha, \rho \right)$.
 To retrieve the identity \eqref{id-Doney} from one of these representations does not seem straightforward.
 We mention that Doney resorts to a limiting procedure to derive the factorization \eqref{id-Doney} of the arc-sine law. More specifically, his proof stems on a  combination of  an identity for each path of a random walk in the domain of attraction of a stable law with the arc-sine theorem which can be found in Spitzer \cite{Spitzer56}. We also mention that the arc-sine law appears surprisingly  in different contexts in probability theory and in particular in the study of functionals of Brownian motion, see e.g.~\cite{PL, PY2, YC,VY}.

The aim of this work is to provide an alternative proof and offer a  generalization of Doney's  factorization of the arc-sine law. The first key step relies on the well-known fact by now that, through the so-called Lamperti mapping, one can relate the law of the supremum of a stable process to the one of the exponential functional of a specific L\'evy process, namely the Lamperti-stable process introduced by  Caballero and Chaumont \cite{CC}.  It is then natural to wonder whether there are  other factorizations of the arc-sine law given in terms of exponential functionals of more general  L\'evy processes. This will be achieved by resorting to the thorough  study on the functional equation satisfied by the Mellin transform  of the exponential functional of L\'evy processes carried out in Patie and Savov \cite{Patie-Savov-Bern}.

Besides proving these identities in a more general framework, the problem of identifying a factorization of the exponential functionals as a simple distribution is interesting on its own since we shall show, on the way, that the law of the ratio of independent exponential functionals of some L\'evy processes is the Pareto's one which is known to  belong to some remarkable sets of probability laws. This new fact  is also relevant as  the exponential functional of L\'evy processes  has attracted the attention of many researchers over the last two decades. The law of this random variable plays an important role in the study of self-similar processes, fragmentation and branching processes and is related to other theoretical problems as for example the moments problem and spectral theory of some non-self-adjoint semigroups, see \cite{Patie-Savov-AMS}. Moreover it also plays an important role in more applied domains as for example in mathematical finance for the evaluation of Asian options, in actuarial sciences for random annuities, as well as in domains like astrophysics and biology. We refer to the survey paper \cite{BY05} for a more detailed account on some of the mentioned fields.
The remaining part of the paper is organized as follows. We state our main factorization of the arc-sine law along with some consequences and examples in the next section. The last section is devoted to the proofs.

\section{The  arc-sine law and exponential functional of L\'evy processes} \label{subsec : main results}
Throughout this paper we denote by $\xi=(\xi_{t})_{t\geq0}$ a possibly killed L\'evy process issued from $0$ and defined on the probability space $(\Omega, \mathcal F, \mathbb P)$. It means that $\xi$ is a real-valued stochastic process   having independent and stationary increments and possibly killed at the random time $\e_{q}$, which is independent of $\xi$ and exponentially distributed with parameter $q \geq 0$, where we understand that $\e_{0}=+\infty$. We denote by $\Psi$ its L\'evy-Khintchine exponent, which, for any $z \in i \R$, takes the form
\begin{eqnarray}\label{eq:lap-levy}
\log \E\left[ e^{z \xi_{1}} \right]=\Psi(z) =az + \frac{1}{2} \sigma^2 z^2 + \int_{\R} \left(e^{z y}-1
-zy\Id{|y|<1} \right)\Pi(dy)-q
\end{eqnarray}
where  $a\in \R$, $\sigma \geq 0$ and
$\Pi$ is a Radon measure on $\R$ satisfying the conditions $\int_{\R} (1
\wedge y^2 )\:\Pi(dy) <+ \infty$ and $\Pi(\{0\})=0$. The law of $\xi_1$ is infinitely divisible and the one of $\xi$ is uniquely characterized by the quadruplet $(q,a,\sigma,\Pi)$.
An excellent account on L\'evy processes can be found in the monographs \cite{B96,Dbook, Kyprianou06, Sato99}.
Next, we define the exponential functional associated to the L\'evy process $\xi$ by
\begin{equation} \label{def: exponential functional}
{\rm{I}}_{\Psi} = \int_0^{\infty} e^{\xi_t} dt= \int_0^{\e_{q}} e^{\xi_t} dt.
\end{equation}
The variable ${\rm{I}}_{\Psi}$ is well defined if either $\Psi(0)=-q<0$ or $\lim_{t\rightarrow+\infty}\xi_{t}= - \infty$ a.s. This last condition is equivalent to Erickson's integral tests involving the L\'evy measure $\Pi$ and the drift $a$, see Bertoin and Yor \cite[Theorem 1]{BY05}. With this remark in mind, we denote by $\mathcal{N}$ the set of L\'evy Khintchine exponents of the form \eqref{eq:lap-levy} for which the exponential functional ${\rm{I}}_{\Psi}$ is well defined, i.e.
\begin{equation}
\mathcal{N} = \left\{ \Psi \mbox{ of the form \eqref{eq:lap-levy}} ; \:  \Psi(0)<0 \mbox{ or } \lim_{t\rightarrow+\infty}\xi_{t}= - \infty \mbox{ a.s.} \right\}.
\end{equation}
Note that $\mathcal{N}$ is a subspace of the negative of continuous negative-definite functions, as defined in \cite{J01}.
Next, it is well-known that  $\Psi\in \mathcal{N}$ admits an analytical extension (still denoted by $\Psi$) to the strip $\mathbb{C}_{(0,\beta)}=\{z\in \C; \: 0<\Re(z)<\beta\}$ with $ 0<\beta$ if and only if $\labs\E\lbb e^{z\xi_1}\rbb\rabs< \infty$ for all $z\in \mathbb{C}_{(0,\beta)}$. Note that the existence of exponential moments for all $z\in \mathbb{C}_{(0,\beta)}$ is equivalent to
\begin{equation} \label{eq: exp mom Sato cond}
\int_{ y >1} e^{u y} \Pi(dy) < \infty \mbox{ for all } u \in (0,\beta).
\end{equation}
Under this condition, the restriction of $\Psi$ on the real interval $(0,\beta)$ is convex and  the condition
 $\lim_{t\rightarrow+\infty}\xi_{t}= - \infty$ a.s.~is equivalent to $\Psi'(0^{+})<0$, see e.g.~\cite[Theorem 1 and Remark p.193]{BY05}. We then define for any $\beta>0$,
 \[ \mathcal{N}_{\beta}=\left\{ \Psi \in \mathcal{N}; \: \eqref{eq: exp mom Sato cond} \textrm{ holds}  \right\}. \]
Next, for any  $\Psi \in \mathcal{N}_{\beta}$, let us denote \[ \rho= \sup\{u \in (0,\beta); \: \Psi(u)=0\}\] with the usual convention  that $\sup \{\emptyset \}=+ \infty$ and,
 introducing the notation
\begin{equation*}
\overline{\Pi}_{+}\left(y\right)=\int_{y}^{\infty} \Pi(dr) \mathbb{I}_{\{y>0\}},
\end{equation*}
 we  define
\begin{equation*}
\mathcal{N}_{\beta}(\rho) = \left\{ \Psi \in \mathcal{N}_{\beta}; \rho<\infty, y \mapsto e^{\beta y} \overline{\Pi}_{+}\left(y\right)\textrm{ is non-increasing},  \infty<\lim_{u\uparrow 0} u \Psi(u+\beta) \leq 0 \right\}.
\end{equation*}
We point out that if $\Psi \in \mathcal{N}_{\beta}(\rho)$ and $\lim_{u\uparrow 0} u \Psi(u+\beta)$ exists then necessarily $\lim_{u\uparrow 0} u \Psi(u+\beta) \leq 0$ as, by definition $\rho<\beta$, and $\Psi$ is convex increasing on $(\rho, \beta)$. Note also that for any $\Psi \in \mathcal N$ with $\overline{\Pi}_{+} \equiv 0$,  we always have $0<\rho <\infty$ and thus $\Psi \in \mathcal{N}_{\beta}(\rho)$ for all $\beta >\rho$. We also point out if $|\Psi(\beta)|<\infty$, that is $\Psi$ extends continuously to the line $\beta+i\R$, then plainly $\lim_{u\uparrow 0} u \Psi(u+\beta)=0$. We are now ready to state our main result.

\begin{theorem} \label{mainthm}
Assume that $ \Psi \in \mathcal{N}_{1}(\rho)$ with  $0<\rho<1$, then  $\widehat{\Psi}_1(z)=\Psi_1(-z) \in \mathcal{N}_1$ with $\widehat{\Psi}_1(1-\rho)=0$,  where
\begin{equation*}
\Psi_{1}(z)=\frac{z}{z+1}{\Psi}\left(z+1\right), \: z \in i\R,
\end{equation*}
and,
\begin{equation} \label{thm-Arc}
\frac{{\rm{I}}_{{\widehat{\Psi}_{1}}}}{{\rm{I}}_{{\widehat{\Psi}_{1}}}+{\rm{I}}_{{\Psi}}}  \stackrel{\left(d\right)}{=} \A_{\rho} \mbox{ and }  \frac{{\rm{I}}_{{\Psi}}}{{\rm{I}}_{{\widehat{\Psi}_{1}}}+{\rm{I}}_{{\Psi}}}  \stackrel{\left(d\right)}{=} \A_{1-\rho}
\end{equation}
where the variables  ${\rm{I}}_{{\Psi}}$ and ${\rm{I}}_{{\widehat{\Psi}_{1}}}$ are taken independent.
\end{theorem}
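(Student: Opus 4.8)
The backbone of the argument is the functional equation for the Mellin transform $\M_{\Phi}(s)=\E[{\rm I}_{\Phi}^{s-1}]$ of the exponential functional of a $\Phi\in\mathcal N$, established in \cite{Patie-Savov-Bern}: one has $\M_{\Phi}(s+1)=-\tfrac{s}{\Phi(s)}\M_{\Phi}(s)$ with $\M_{\Phi}(1)=1$, and moreover $\M_{\Phi}$ is analytic on a right half-plane (away from the zeros of $s\mapsto\Phi(s)$) with explicit decay along vertical lines. The plan is to apply this both to $\Psi$ and to $\widehat\Psi_{1}$, form an auxiliary product of the two Mellin transforms, recognise it as solving the same difference equation as an explicit ratio of $\Gamma$-functions, and conclude by a Liouville argument.

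First I would dispose of the preliminary claims. Since $\Psi\in\mathcal N_{1}(\rho)$ with $0<\rho<1$, the restriction of $\Psi$ to $(0,1)$ is convex and continuous with $\Psi(0)\le0$, so $\rho$ is its largest zero in $(0,1)$ and $\Psi(\rho)=0$; hence $\widehat\Psi_{1}(1-\rho)=\Psi_{1}(\rho-1)=\tfrac{\rho-1}{\rho}\Psi(\rho)=0$. The delicate point is that $\widehat\Psi_{1}(z)=\tfrac{-z}{1-z}\Psi(1-z)$ is the L\'evy--Khintchine exponent of a (possibly killed) L\'evy process lying in $\mathcal N_{1}$. Here the monotonicity hypothesis that $y\mapsto e^{y}\,\overline{\Pi}_{+}(y)$ is non-increasing is precisely what forces the candidate L\'evy measure of $\widehat\Psi_{1}$ to be a non-negative measure, while $-\infty<\lim_{u\uparrow0}u\,\Psi(u+1)\le0$ identifies $\widehat\Psi_{1}(0)$ as an admissible killing rate. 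The exponential functional ${\rm I}_{\widehat\Psi_{1}}$ is then well defined because the convexity of $\widehat\Psi_{1}$ on $(0,1)$ together with $\widehat\Psi_{1}(1-\rho)=0$, $1-\rho\in(0,1)$, forces $\widehat\Psi_{1}'(0^{+})<0$ or $\widehat\Psi_{1}(0)<0$ (the degenerate case $\widehat\Psi_{1}\equiv0$ being ruled out, since it would force $\Psi\equiv0$); and \eqref{eq: exp mom Sato cond} with $\beta=1$ follows because $\widehat\Psi_{1}$ manifestly extends analytically to $\mathbb{C}_{(0,1)}$. Carrying out this verification — by invoking, and if necessary adapting, the structural results of \cite{Patie-Savov-Bern} on the transformation $\Psi\mapsto\Psi_{1}$ — is the first genuine obstacle.

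For the factorization I set $h(s)=\M_{\widehat\Psi_{1}}(s)\,\M_{\Psi}(2-s)$. On the vertical strip $1-\rho<\Re(s)<2-\rho$ both factors are finite (the poles of $\M_{\Psi}(2-\cdot)$ at $\Re(s)=1-\rho$ and of $\M_{\widehat\Psi_{1}}$ at $\Re(s)=2-\rho$ lying on its boundary), so by independence $h(s)=\E\big[({\rm I}_{\widehat\Psi_{1}}/{\rm I}_{\Psi})^{s-1}\big]$ there. A short computation with the functional equation, using $\widehat\Psi_{1}(s)=\tfrac{-s}{1-s}\Psi(1-s)$, gives $\M_{\widehat\Psi_{1}}(s+1)=\tfrac{1-s}{\Psi(1-s)}\M_{\widehat\Psi_{1}}(s)$ and $\M_{\Psi}(2-s)=\tfrac{s-1}{\Psi(1-s)}\M_{\Psi}(1-s)$, whence $h(s+1)=-h(s)$ and $h(1)=1$. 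Exactly these two identities are satisfied by $\tilde h(s):=\Gamma(\rho+s-1)\Gamma(2-\rho-s)/\big(\Gamma(\rho)\Gamma(1-\rho)\big)$, which on the same strip is the Mellin transform of the Pareto law of parameter $(\rho,1-\rho)$, with density $\tfrac{\sin(\pi\rho)}{\pi}\,x^{\rho-1}(1+x)^{-1}$ on $(0,\infty)$. Since $\Gamma$ has no zeros, $h/\tilde h$ is analytic on the open strip and $1$-periodic; as $h$ and $\tilde h$ have only simple poles, all located on $1-\rho+\Z$, the quotient $h/\tilde h$ extends to an entire function.

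Finally I would show $h/\tilde h$ is constant. By the sharp large-$|\Im(s)|$ estimates for Mellin transforms of exponential functionals in \cite{Patie-Savov-Bern}, the product $\M_{\widehat\Psi_{1}}(\sigma+it)\,\M_{\Psi}(2-\sigma-it)$ decays like $e^{-\pi|t|}$ as $|t|\to\infty$, which is exactly the rate furnished by Stirling's formula for $\tilde h(\sigma+it)$; hence $h/\tilde h$ is bounded on $1-\rho<\Re(s)<2-\rho$, and being $1$-periodic it is bounded on all of $\mathbb C$, so Liouville's theorem gives $h\equiv (h(1)/\tilde h(1))\,\tilde h=\tilde h$. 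Verifying that these two decay rates indeed coincide is, I expect, the main analytic difficulty. Once $h=\tilde h$, injectivity of the Mellin transform on the strip yields ${\rm I}_{\widehat\Psi_{1}}/{\rm I}_{\Psi}\stackrel{(d)}{=}R$ with $R$ Pareto of parameter $(\rho,1-\rho)$, and then $\tfrac{{\rm I}_{\widehat\Psi_{1}}}{{\rm I}_{\widehat\Psi_{1}}+{\rm I}_{\Psi}}=\tfrac{R}{1+R}\stackrel{(d)}{=}\A_{\rho}$ and $\tfrac{{\rm I}_{\Psi}}{{\rm I}_{\widehat\Psi_{1}}+{\rm I}_{\Psi}}=\tfrac{1}{1+R}\stackrel{(d)}{=}\A_{1-\rho}$, which is \eqref{thm-Arc}.
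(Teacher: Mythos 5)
Your overall architecture coincides with the paper's: verify that $\widehat{\Psi}_1$ is an exponent in $\mathcal{N}_1$ vanishing at $1-\rho$, invoke the Patie--Savov functional equation $\mathcal{M}_{{\rm{I}}_{\Phi}}(s+1)=\frac{-s}{\Phi(s)}\mathcal{M}_{{\rm{I}}_{\Phi}}(s)$ for both exponents, observe that the Mellin transform $h$ of the ratio of the two independent exponential functionals satisfies $h(s+1)=-h(s)$, $h(1)=1$, identify $h$ with a Beta/Pareto Mellin transform, and finish via $R/(1+R)$. Your computation of the recurrence for $h$, the strip $\C_{(1-\rho,2-\rho)}$, the pole cancellation against $\tilde h$, and the final passage to $\A_{\rho}$ and $\A_{1-\rho}$ are all correct (the label ``Pareto of parameter $(\rho,1-\rho)$'' does not match the density you write, which in the paper's convention is $\P_{1-\rho,\rho}$, but this is harmless since you only use the density). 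The first claim, that $\mathcal{T}_1\Psi$ is a genuine (possibly killed) L\'evy--Khintchine exponent so that $\widehat{\Psi}_1\in\mathcal{N}_1$, you defer to ``invoking, and if necessary adapting'' known results; this is precisely what the paper's Lemma 3.1 provides by extending \cite[Proposition 2.1]{PS12}, the only genuinely new point being the boundary case $|\Psi(1)|=\infty$, where the killing rate is $q_1=\lim_{u\to0}\mathcal{T}_1\Psi(u)\leq 0$ and the proof uses that negative-definite functions form a cone closed under pointwise convergence. You correctly identify what each hypothesis in $\mathcal{N}_1(\rho)$ is for, but no argument is given, so this part is a deferral rather than a proof.

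The substantive gap is your Liouville step. You propose to bound $h/\tilde h$ by showing that $h(\sigma+it)=\mathcal{M}_{{\rm{I}}_{\widehat{\Psi}_1}}(\sigma+it)\,\mathcal{M}_{{\rm{I}}_{\Psi}}(2-\sigma-it)$ decays like $e^{-\pi|t|}$, to be extracted from ``sharp large-$|t|$ estimates'' in \cite{Patie-Savov-Bern}, and you yourself flag the matching of decay rates as the main difficulty. As formulated this step does not go through: the decay of $\mathcal{M}_{{\rm{I}}_{\Phi}}$ along vertical lines is highly process-dependent (in \cite{Patie-Savov-Bern} it is quantified through the Bernstein-gamma factors and can be merely polynomial, e.g.\ for exponents with no Gaussian component and finite jump activity), so no a priori rate of order $e^{-\pi|t|/2}$ per factor is available under the sole hypothesis $\Psi\in\mathcal{N}_1(\rho)$; the exponential decay of the product is a consequence of the identity you are trying to prove, not an input. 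The repair -- and this is exactly how the paper proceeds -- is to observe that no decay of $h$ is needed: $h$ is the Mellin transform of a positive random variable, hence $|h(1+it)|\leq 1$, while Stirling gives a lower bound of order $|t|^{a}e^{-\pi|t|}$ for $|\tilde h|$; consequently the $1$-periodic function $h/\tilde h$ grows strictly slower than $e^{2\pi|\Im s|}$ and must be constant by the theorem on periodic functions in \cite[Section 5.81]{Titch}. The paper packages this as Lemma 3.3: the Pareto Mellin transform is the \emph{unique positive-definite} solution of $M(s+1)=-M(s)$, $M(1)=1$, and the main proof then simply cites that uniqueness. With that weaker requirement substituted for your matching-decay argument, your proposal closes; as written, the decisive boundedness of $h/\tilde h$ is not established.
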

We proceed by providing some consequences of this main result. We first derive some interesting distributional properties for the ratio of independent exponential functionals. To this end, we recall that a positive random variable  is  hyperbolically completely monotone if its law is absolutely continuous with a probability density $f$ on $(0, \infty)$  which is such that the function $h$ defined  on $(0, \infty)$  by
\begin{equation}
h(w)=f\left(uv\right)f\left(u/v\right), \ \mbox{ with } w=v+v^{-1},
\end{equation}
is, for each fixed $u>0$, completely monotone, i.e.~$\left(-1\right)^{n}\frac{d^n}{dw^n}h(w)\geq0$ on $(0, \infty)$ for all integers $n\geq0$.
This  remarkable set of random variables was introduced by Bondesson and  in \cite[Theorem 2]{Bond}, he shows  that it is a subset of the class of generalized gamma convolution. We recall that  a positive random variable belongs to this latter class if it is self-decomposable, and hence infinitely divisible, such that its L\'evy measure $\Pi$, concentrated on $\R^+$, is such that $\int_{0}^{\infty}(1 \wedge y) \Pi(dy)<\infty$ and $\Pi(dy)=\frac{k(y)}{y}dy$  where  $k$ is completely monotone. We also say that a positive random variable ${\rm{I}}$ is  multiplicative infinitely divisible if $\log I$ is infinitely divisible. It turns out that under some conditions the random variables  ${\rm{I}}_{{\Psi}}$   is multiplicative infinitely divisible, see
\cite[Theorem 1.5]{Alili-Jadidi-14} when $\Psi$ is a Bernstein function   and \cite[Theorem 4.7]{Patie-Savov-Bern} in the general case.

\begin{corollary} \label{cor:par}
With the notation and assumptions of Theorem \ref{mainthm}, the random variables $\frac{{\rm{I}}_{{\Psi}}}{{\rm{I}}_{{\widehat{\Psi}_{1}}}}$ and $\frac{{\rm{I}}_{{\widehat{\Psi}_{1}}}}{{\rm{I}}_{{\Psi}}}$ are
hyperbolically completely monotone and multiplicative infinitely divisible. Moreover, when $\rho=\frac12$, then $\frac{{\rm{I}}_{{\Psi}}}{{\rm{I}}_{{\widehat{\Psi}_{1}}}}$ is self-reciprocal, i.e.~it has the same law than $\frac{{\rm{I}}_{{\widehat{\Psi}_{1}}}}{{\rm{I}}_{{\Psi}}}$, and it has the law of $\mathcal C^2$ where $\mathcal C$ is a standard Cauchy variable.
\end{corollary}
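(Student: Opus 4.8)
The plan is to read off the exact law of the two ratios from Theorem~\ref{mainthm} and then to invoke classical closure properties of the relevant classes of laws. First I would apply the increasing bijection $x\mapsto (1-x)/x$ of $(0,1)$ onto $(0,\infty)$ to the two identities in \eqref{thm-Arc}, which gives
\[
\frac{{\rm{I}}_{\Psi}}{{\rm{I}}_{\widehat{\Psi}_{1}}}\stackrel{\left(d\right)}{=}\frac{1-\A_{\rho}}{\A_{\rho}}
\quad \mbox{ and } \quad
\frac{{\rm{I}}_{\widehat{\Psi}_{1}}}{{\rm{I}}_{\Psi}}\stackrel{\left(d\right)}{=}\frac{1-\A_{1-\rho}}{\A_{1-\rho}} .
\]
A one-line change of variables starting from the density of $\A_{\rho}$ recalled in the introduction shows that $(1-\A_{\rho})/\A_{\rho}$ is absolutely continuous with density $\frac{\sin(\pi\rho)}{\pi}\frac{z^{-\rho}}{1+z}$ on $(0,\infty)$, and the reflection formula $\Gamma(\rho)\Gamma(1-\rho)=\pi/\sin(\pi\rho)$ then lets me recognize this as the law of $\mathbf{g}_{1-\rho}/\mathbf{g}_{\rho}$, where $\mathbf{g}_{1-\rho}$ and $\mathbf{g}_{\rho}$ are \emph{independent} Gamma variables of unit scale with respective shape parameters $1-\rho$ and $\rho$. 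This is the Pareto-type law alluded to in the introduction, and the second ratio is, consistently, its reciprocal $\mathbf{g}_{\rho}/\mathbf{g}_{1-\rho}$.

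Given this identification, the hyperbolic complete monotonicity follows from Bondesson's calculus for the HCM class: a Gamma variable is HCM, the reciprocal of an HCM variable is HCM, and a product of two independent HCM variables is HCM; applying these facts to $\mathbf{g}_{1-\rho}\,\mathbf{g}_{\rho}^{-1}$ and to its reciprocal shows that both ${\rm{I}}_{\Psi}/{\rm{I}}_{\widehat{\Psi}_{1}}$ and ${\rm{I}}_{\widehat{\Psi}_{1}}/{\rm{I}}_{\Psi}$ are HCM. For the multiplicative infinite divisibility I would write $\log({\mathbf{g}_{1-\rho}}/{\mathbf{g}_{\rho}})=\log\mathbf{g}_{1-\rho}-\log\mathbf{g}_{\rho}$ and combine the classical infinite divisibility of the logarithm of a Gamma variable with the closedness of the class of infinitely divisible laws under independent differences; alternatively, one may invoke that every HCM law has an infinitely divisible logarithm.

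Finally, for $\rho=\frac12$ the density above reduces to $\frac{1}{\pi}\frac{1}{\sqrt{z}\,(1+z)}$ on $(0,\infty)$, which is manifestly invariant under the involution $z\mapsto 1/z$ once the Jacobian $z^{-2}$ is taken into account, so ${\rm{I}}_{\Psi}/{\rm{I}}_{\widehat{\Psi}_{1}}$ is self-reciprocal; and a direct computation of the density of $\mathcal{C}^{2}$, for $\mathcal{C}$ a standard Cauchy variable, yields the very same function $\frac{1}{\pi\sqrt{w}\,(1+w)}$, which identifies the common law. I do not anticipate any genuine obstacle: all the analytic work is already carried out in Theorem~\ref{mainthm}, and the only mildly delicate point is recognizing the transformed arc-sine law as the law of a ratio of two independent Gamma variables, after which everything reduces to standard properties of the Gamma distribution and of the HCM class.
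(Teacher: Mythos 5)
Your proposal is correct and follows essentially the same route as the paper: the paper identifies $\frac{{\rm{I}}_{{\Psi}}}{{\rm{I}}_{{\widehat{\Psi}_{1}}}}$ as the Pareto variable $\P_{\rho}\stackrel{(d)}{=}\G_{1-\rho}/\G_{\rho}$ (an identity obtained inside the proof of Theorem \ref{mainthm}, equivalent to your transfer of \eqref{thm-Arc} through $x\mapsto(1-x)/x$, which incidentally is decreasing, not increasing), then invokes Bondesson for the HCM property, the infinite divisibility of $\log\G_a$ plus closure under independent differences for the multiplicative infinite divisibility, and the classical Pareto--Cauchy connection for $\rho=\tfrac12$. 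Your only departures are cosmetic: you spell out the HCM closure rules and verify the self-reciprocity and the law of $\mathcal C^2$ by direct density computations rather than by citation.
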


Another  consequence of Theorem \ref{mainthm} is  the following.
\begin{corollary}\label{cor:sr}
Doney's identity \eqref{id-Doney} holds.
\end{corollary}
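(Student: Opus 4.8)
The plan is to deduce Corollary \ref{cor:sr} from Theorem \ref{mainthm} by exhibiting the Lamperti--stable L\'evy process whose exponential functional realizes the supremum $\M_\rho$ of the stable process, and then checking that its exponent $\Psi$ lies in $\mathcal{N}_1(\rho)$ and that the conjugate $\widehat{\Psi}_1$ is precisely the exponent governing $\widehat{\M}_\rho$. First I would recall the Lamperti representation: if $X$ is a stable process of index $\alpha\in(0,2)$ and positivity parameter $\rho$, then, via the Lamperti transformation applied to the associated positive self-similar Markov process (here the stable process started from a positive point, killed on entering $(-\infty,0]$, or the process conditioned to stay positive), one obtains a L\'evy process $\xi$ whose exponent $\Psi$ was computed explicitly by Caballero and Chaumont \cite{CC}; up to the self-similarity scaling by the index $\alpha$, the law of $\M_\rho^\alpha$ coincides (possibly after an affine/scaling normalization) with that of the exponential functional ${\rm{I}}_\Psi$. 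The precise statement I want is $\M_\rho^{\alpha}\stackrel{(d)}{=} c\,{\rm{I}}_{\Psi}$ for the appropriate Lamperti--stable exponent $\Psi$, which is by now classical (see e.g.~\cite{DS,CC}); I would cite it rather than reprove it.

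Next I would verify the hypotheses of Theorem \ref{mainthm} for this $\Psi$, namely $\Psi\in\mathcal{N}_1(\rho)$ with $0<\rho<1$. The positivity parameter of the stable process is exactly $\rho\in(0,1)$, and the exponent of the Lamperti--stable process is known to vanish at $\rho$ and to have a L\'evy measure with explicit exponentially decaying tails, from which the monotonicity condition ``$y\mapsto e^{y}\overline{\Pi}_+(y)$ is non-increasing'' and the boundary condition ``$\lim_{u\uparrow0}u\Psi(u+1)\le 0$'' (in fact $|\Psi(1)|<\infty$, so this limit is $0$) can be read off; the exponential moment condition \eqref{eq: exp mom Sato cond} with $\beta=1$ follows from the same tail estimate. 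Then Theorem \ref{mainthm} yields $\widehat{\Psi}_1(z)=\Psi_1(-z)\in\mathcal{N}_1$ with $\widehat{\Psi}_1(1-\rho)=0$ and the factorization $\frac{{\rm{I}}_{\widehat{\Psi}_1}}{{\rm{I}}_{\widehat{\Psi}_1}+{\rm{I}}_{\Psi}}\stackrel{(d)}{=}\A_\rho$. It remains to identify ${\rm{I}}_{\widehat{\Psi}_1}$, up to a constant, with $\widehat{\M}_\rho^{\alpha}$: since $\widehat{X}=-\widetilde{X}$ is again a stable process of index $\alpha$ but with positivity parameter $1-\rho$, the same Lamperti--stable computation gives $\widehat{\M}_\rho^{\alpha}\stackrel{(d)}{=} c'\,{\rm{I}}_{\widehat{\Psi}_1}$ provided one checks that the exponent $\Psi_1(-\cdot)$ produced by the $z/(z+1)\cdot\Psi(z+1)$ recipe coincides with the Lamperti--stable exponent attached to the dual stable process. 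This matching of two explicit rational/trigonometric expressions for the Laplace exponents — involving ratios of gamma functions with arguments shifted by $\rho$ and $1-\rho$ — is the computational heart of the corollary.

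With both exponential functionals identified (up to positive multiplicative constants $c,c'$), I would observe that the ratio $\frac{{\rm{I}}_{\widehat{\Psi}_1}}{{\rm{I}}_{\widehat{\Psi}_1}+{\rm{I}}_\Psi}$ is invariant under simultaneous rescaling only if $c=c'$; but in fact the constants match because both come from the same self-similarity normalization of the Lamperti transform, or, more robustly, one writes $\frac{\widehat{\M}_\rho^\alpha}{\widehat{\M}_\rho^\alpha+\M_\rho^\alpha}=\frac{c'{\rm{I}}_{\widehat{\Psi}_1}}{c'{\rm{I}}_{\widehat{\Psi}_1}+c\,{\rm{I}}_\Psi}$ and notes that Doney's identity \eqref{id-Doney} as stated does not depend on such constants only when $c=c'$ — so I would be careful here and either track the constants explicitly or invoke the fact that $X$ and $\widetilde X$ are copies of the \emph{same} normalized stable process, forcing $c=c'$. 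Then \eqref{thm-Arc} reads exactly \eqref{id-Doney}, and the companion identity $\frac{\M_\rho^\alpha}{\M_\rho^\alpha+\widehat\M_\rho^\alpha}\stackrel{(d)}{=}\A_{1-\rho}$ is the symmetric statement.

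I expect the main obstacle to be the bookkeeping in the second step: showing that the conjugation $\Psi\mapsto\widehat{\Psi}_1$, defined abstractly through $\Psi_1(z)=\frac{z}{z+1}\Psi(z+1)$, sends the Lamperti--stable exponent of parameter $(\alpha,\rho)$ to the Lamperti--stable exponent of parameter $(\alpha,1-\rho)$ — equivalently, that the exponential functional ${\rm{I}}_{\widehat{\Psi}_1}$ is (a constant times) $\widehat{\M}_\rho^\alpha$ rather than some a priori different law. The cleanest route is probably to compare Mellin transforms: using the functional equation for $\E[{\rm{I}}_\Psi^{s-1}]$ from \cite{Patie-Savov-Bern} together with the known Mellin transform of $\M_\rho^\alpha$ (a ratio of gamma factors, available from \cite{DS,K11,KP13}), one checks that $\E[{\rm{I}}_{\widehat{\Psi}_1}^{s-1}]$ and $\E[(\widehat{\M}_\rho^\alpha)^{s-1}]$ satisfy the same recursion with the same normalization at $s=1$, hence agree. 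Verifying the precise gamma-function identities (and the constant $c=c'$) is routine but must be done carefully; everything else is a direct application of Theorem \ref{mainthm}.
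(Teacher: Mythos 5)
Your strategy is exactly the paper's: realize the supremum via the Lamperti--stable exponent $\Psi_{\alpha,\rho}$ of \eqref{eq: LE examples A Doney}, check the hypotheses of Theorem \ref{mainthm}, verify that the transformation $\mathcal T_1$ followed by duality sends $(\alpha,\rho)$ to $(\alpha,1-\rho)$, and conclude. However, two concrete points in your write-up would derail the bookkeeping if carried out literally. First, the classical identification is with the \emph{negative} power and with no multiplicative constant: $\M_{\rho}^{-\alpha}\stackrel{(d)}{=}{\rm{I}}_{\Psi_{\alpha,\rho}}$, as in \eqref{eq: IDiL EF Sub stable} (see \cite[p.~133]{KP13}); your starting point $\M_\rho^{\alpha}\stackrel{(d)}{=}c\,{\rm{I}}_{\Psi}$ is not the available identity, and with it the first identity of \eqref{thm-Arc} would give $\frac{\widehat{\M}_{\rho}^{\alpha}}{\widehat{\M}_{\rho}^{\alpha}+\M_{\rho}^{\alpha}}\stackrel{(d)}{=}\A_{\rho}$, i.e.\ $\frac{\M_{\rho}^{\alpha}}{\M_{\rho}^{\alpha}+\widehat{\M}_{\rho}^{\alpha}}\stackrel{(d)}{=}\A_{1-\rho}$, which is \emph{not} \eqref{id-Doney}; indeed your closing paragraph simultaneously asserts that \eqref{thm-Arc} ``reads exactly \eqref{id-Doney}'' and that the companion identity is $\frac{\M_\rho^\alpha}{\M_\rho^\alpha+\widehat\M_\rho^\alpha}\stackrel{(d)}{=}\A_{1-\rho}$, which are incompatible. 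With the correct negative powers the inversion does the work for you, since
\begin{equation*}
\frac{{\rm{I}}_{\widehat{\Psi}_1}}{{\rm{I}}_{\widehat{\Psi}_1}+{\rm{I}}_{\Psi}}
\stackrel{(d)}{=}\frac{\widehat{\M}_{\rho}^{-\alpha}}{\widehat{\M}_{\rho}^{-\alpha}+\M_{\rho}^{-\alpha}}
=\frac{\M_{\rho}^{\alpha}}{\M_{\rho}^{\alpha}+\widehat{\M}_{\rho}^{\alpha}},
\end{equation*}
so the first identity of \eqref{thm-Arc} is precisely \eqref{id-Doney}, and the constants $c,c'$ you worry about never appear.

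Second, your hypothesis check ``in fact $|\Psi(1)|<\infty$, so this limit is $0$'' is false for $\Psi_{\alpha,\rho}$: the factor $\Gamma(\alpha-\alpha z)$ in \eqref{eq: LE examples A Doney} has a pole at $z=1$, so $|\Psi_{\alpha,\rho}(1)|=\infty$ and one must actually compute $\lim_{u\to 0}\frac{u}{u+1}\Psi_{\alpha,\rho}(u+1)=-\frac{\Gamma(\alpha)}{\Gamma(\alpha(1-\rho))\Gamma(1-\alpha(1-\rho))}\leq 0$, which is strictly negative when $\alpha(1-\rho)<1$; the transformed exponent then carries a killing rate, and this is precisely the boundary case $\beta=\beta_+$ that Lemma \ref{lemma-lintrans} (and the remark after it) was designed to cover. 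Finally, the step you call the ``computational heart'' --- that $\mathcal T_1\Psi_{\alpha,\rho}(-z)=\Psi_{\alpha,1-\rho}(z)$ --- is only announced in your proposal, whereas it is the substance of the corollary; in the paper it is a short computation with the gamma recurrence, and once done it also identifies $\widehat{\Psi}_1$ as a bona fide Lamperti--stable exponent, which dispenses with a separate verification of the monotonicity of $y\mapsto e^{y}\overline{\Pi}_{+}(y)$. Your alternative suggestion of matching Mellin transforms via Lemma \ref{lem:recef} would also work, but as written the proposal defers rather than performs the decisive computation.
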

We close this section by describing another example illustrating our main factorization of the arc-sine law with some classical variables and refer the interested reader to the thesis \cite{Carine} for the description of additional examples.  Let us consider first  $S(\alpha)$  a positive $\alpha$-stable variable, with $0<\alpha<1$, and denote by $S_{\gamma}^{-\alpha}(\alpha)$ its  $\gamma$-length-biased, $\gamma>0$, that is for any bounded measurable function $g$ on $\R^+$, one has
\begin{equation*}
\E\left[ g(S_{\gamma}^{-\alpha}(\alpha))\right] = \frac{\E\left[S^{-\alpha \gamma}(\alpha) g(S^{-\alpha}(\alpha))\right]}{ \E\left[S^{-\alpha\gamma}(\alpha)\right]}
\end{equation*}
where we recall that $\E\left[S^{-\alpha\gamma}(\alpha)\right]<\infty$, see e.g.~\cite[Section 3(3)]{P11}. We also denote by $\G_{a}$ a gamma variable of parameter $a>0$.
 \begin{corollary}\label{cor:s2}
 Let  $0<\alpha,\rho<1$, then we have the following factorization of the arc-sine law
\begin{equation*} 
\frac{ \G_{\alpha (1-\rho)}^{-\alpha}}{\G_{1-\rho}^{-1}S_{\rho}^{-\alpha}(\alpha) + \G_{\alpha (1-\rho)}^{-\alpha}} \stackrel{\left(d\right)}{=} \A_{\rho}
\end{equation*}
where the three variables $\G_{\alpha (1-\rho)}, S_{\rho}(\alpha)$ and $\G_{1-\rho}$ are taken independent.
\end{corollary}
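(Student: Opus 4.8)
The plan is to exhibit this as a special case of Theorem \ref{mainthm} by identifying an explicit L\'evy exponent $\Psi \in \mathcal{N}_1(\rho)$ whose exponential functional, together with that of the dual $\widehat{\Psi}_1$, produces the three named variables. First I would look for $\Psi$ among the Lamperti-stable exponents of Caballero and Chaumont, since these are precisely the exponents that connect stable suprema to exponential functionals. Concretely, take the L\'evy process $\xi$ to be the Lamperti-stable process associated with the positive $\alpha$-stable subordinator (or an appropriate Esscher/linear transform thereof), chosen so that $\rho = \sup\{u\in(0,1):\Psi(u)=0\}$ is the given positivity parameter; one checks directly from the explicit form of the Lamperti-stable L\'evy measure that the monotonicity condition $y\mapsto e^{y}\overline{\Pi}_+(y)$ holds and that $\lim_{u\uparrow 0} u\Psi(u+1)$ is finite and $\le 0$, so that $\Psi \in \mathcal{N}_1(\rho)$.

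Next I would compute ${\rm{I}}_\Psi$ and ${\rm{I}}_{\widehat{\Psi}_1}$ explicitly. The key input is the known Mellin transform / moment identity for the exponential functional: $\E[{\rm{I}}_\Psi^{s-1}]$ solves the functional equation $\M(s+1) = -\frac{s}{\Psi(s)}\M(s)$, and for the Lamperti-stable exponent this recursion telescopes into a ratio of gamma functions. I expect to recognize $\E[{\rm{I}}_\Psi^{s}]$ as the Mellin transform of $\G_{1-\rho}^{-1}S_{\rho}^{-\alpha}(\alpha)$ — recalling the classical formulas $\E[S^{-\alpha s}(\alpha)] = \Gamma(1+s)/\Gamma(1+\alpha s)$ and $\E[\G_a^{-s}] = \Gamma(a-s)/\Gamma(a)$, together with the length-biasing normalization in the statement — and similarly to identify $\E[{\rm{I}}_{\widehat{\Psi}_1}^{s}]$ with the Mellin transform of $\G_{\alpha(1-\rho)}^{-\alpha}$, using $\widehat{\Psi}_1(z)=\Psi_1(-z)=\frac{-z}{1-z}\Psi(1-z)$ and $\widehat{\Psi}_1(1-\rho)=0$ from Theorem \ref{mainthm}. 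Since products and quotients of these independent variables have Mellin transforms that multiply, matching the gamma-function expressions on both sides yields the distributional identification; $\M$-transform inversion is justified because all variables here are moment-determinate (they are, in fact, within the generalized gamma convolution class).

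With the identifications ${\rm{I}}_\Psi \stackrel{(d)}{=} \G_{1-\rho}^{-1}S_{\rho}^{-\alpha}(\alpha)$ and ${\rm{I}}_{\widehat{\Psi}_1}\stackrel{(d)}{=}\G_{\alpha(1-\rho)}^{-\alpha}$ in hand, Theorem \ref{mainthm} — specifically the first identity in \eqref{thm-Arc}, $\frac{{\rm{I}}_{\widehat{\Psi}_1}}{{\rm{I}}_{\widehat{\Psi}_1}+{\rm{I}}_\Psi}\stackrel{(d)}{=}\A_\rho$ with the two exponential functionals independent — gives exactly the claimed factorization, and the independence of $\G_{\alpha(1-\rho)}$, $S_\rho(\alpha)$, $\G_{1-\rho}$ follows from the independence of ${\rm{I}}_\Psi$ and ${\rm{I}}_{\widehat{\Psi}_1}$ plus the independence built into the representation of ${\rm{I}}_\Psi$. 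The main obstacle I anticipate is the bookkeeping in the first step: pinning down the correct Lamperti-stable exponent (including the right linear drift and killing rate, and the precise role of $\alpha$ versus $\rho$) so that its exponential functional has *exactly* the gamma/stable Mellin transform appearing in the statement, and verifying that this $\Psi$ genuinely lies in $\mathcal{N}_1(\rho)$ rather than merely in $\mathcal{N}_1$. Once the exponent is correctly calibrated, the rest is a routine gamma-function manipulation.
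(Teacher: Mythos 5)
Your plan has the same architecture as the paper's proof: produce an explicit $\Psi\in\mathcal{N}_1(\rho)$, identify the laws of ${\rm{I}}_{\Psi}$ and ${\rm{I}}_{\widehat{\Psi}_1}$ via the Mellin functional equation of Lemma \ref{lem:recef}, and then apply the first identity of Theorem \ref{mainthm}. But the proposal stops short precisely where the content of the corollary lies, and its one concrete pointer is misdirected. The exponent that works is \emph{not} found among the Caballero--Chaumont Lamperti-stable exponents \eqref{eq: LE examples A Doney} -- those are the input for Corollary \ref{cor:sr} and lead to stable suprema, not to gamma and positive-stable variables. The paper instead starts from the spectrally positive exponent $\widehat\Psi_{\alpha}(z)=\Gamma(1+\alpha-\alpha z)/(\alpha\Gamma(-\alpha z))$ of \cite{P11}, whose exponential functional is the Fr\'echet variable $\e^{-\alpha}$, observes that it fails the hypotheses of Theorem \ref{mainthm} (its positive root is $1/\alpha>1$), and repairs this by applying the transformation $\mathcal{T}_{\rho+1/\alpha}$ of Lemma \ref{lemma-lintrans}, after checking the monotonicity condition on $e^{\beta y}\overline{\Pi}_{+}(y)$, to obtain $\Psi_{\alpha,\rho}(z)=z\,\Gamma(\alpha(1-\rho)+\alpha z)/\Gamma(-\alpha\rho+\alpha z)$, which is then shown (via $\Psi_{\alpha,\rho}(0)=\Psi_{\alpha,\rho}(\rho)=0$ and convexity) to lie in $\mathcal{N}_1(\rho)$. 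Declaring that an ``appropriate Esscher/linear transform'' exists and that membership in $\mathcal{N}_1(\rho)$ ``can be checked'' is an existence claim, not a proof; this calibration is the heart of the matter, not bookkeeping.

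A second gap is in the claim that the recursion ``telescopes into a ratio of gamma functions.'' The factor $-z/\Psi_{\alpha,\rho}(z)$ involves gamma functions whose arguments move in steps of $\alpha$ while $z$ shifts by $1$, so no naive telescoping applies: the paper solves \eqref{eq:fexp} through the Wiener--Hopf factorization $\Psi_{\alpha,\rho}(z)=-(-z+\rho)\phi_{\rho}(z)$ and the Bernstein-gamma function $W_{\phi_{\rho}}$ of \cite{Patie-Savov-Bern}, identifying $W_{\phi_{\rho}}$ explicitly by the same Carlson-type uniqueness argument as in Lemma \ref{lem:par}. An alternative route consistent with your sketch would be to verify directly that the Mellin transform of the candidate variable $\G_{1-\rho}^{-1}S_{1-\rho}^{-\alpha}(\alpha)$ (respectively $\G_{\alpha(1-\rho)}^{-\alpha}$ for the dual) satisfies the functional equation and is positive-definite, and then invoke the uniqueness statement of Lemma \ref{lem:recef}; but either way a uniqueness argument is indispensable, and your appeal to moment-determinacy is not the right tool -- these variables (e.g.\ $\G_{\alpha(1-\rho)}^{-\alpha}$) do not even admit moments of all positive orders, and what one actually uses is equality of Mellin transforms on a vertical strip or the uniqueness clause of Lemma \ref{lem:recef}. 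So the strategy is the correct one, but as it stands the proposal defers the construction of the exponent, the verification that it belongs to $\mathcal{N}_1(\rho)$, and the resolution of the Mellin recurrence, which together constitute essentially the whole proof.
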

\section{Proofs} \label{sec: proofs}
The proof of Theorem \ref{mainthm} is split into several intermediate results which might be of independent interests.
 First, let $(\mathcal{T}_{\beta})_{\beta \in \R}$ be the group of transformations defined, for a function $f$  on the complex plane, by
\begin{equation} \label{eq:simpletransform}
\mathcal{T}_{\beta} f(z)=\frac{z}{z+\beta}f\left(z+\beta\right).
\end{equation}
In what follows, which is a slight extension of \cite[Proposition 2.1]{PS12}, we show that under mild conditions, this family of transformations  enables to identify an invariance property of the subset of L\'evy-Khintchine exponents. Note that this lemma contains the first claim of Theorem \ref{mainthm}.
\begin{lemma} \label{lemma-lintrans}
Let $\beta_+>0$ and $\Psi$ be of the form \eqref{eq:lap-levy}  such that for any $\beta \in (0,\beta_+), |\Psi(\beta)|<\infty $. Then, for any $\beta \in (0,\beta_+]$ such that
\[  y\mapsto e^{\beta y} \overline{\Pi}_{+}\left(y\right) \textrm{ is non-increasing on } \R^+ \textrm{ and } \infty<q_{\beta}=\lim_{u\to 0}\mathcal{T}_{\beta}\Psi\left(u\right)\leq 0,\]
 we have that $\mathcal{T}_{\beta}\Psi$ is also   of the form \eqref{eq:lap-levy}.  More specifically, its killing  rate is $q_{\beta}$,   its Gaussian coefficient is $\sigma$ and  its L\'evy measure takes the form
\begin{equation} \label{LM-GenCKP1}
  \Pi_{\beta}\left(dy\right)=  e^{\beta y}\left( \Pi\left(dy\right) +\beta dy\left(\overline{\Pi}_{-}\left(y\right) -\overline{\Pi}_{+}\left(y\right)+q\mathbb{I}_{\{y<0\}} \right)\right), \: y \in \R.
 \end{equation}
 Finally, if, in addition, $\Psi \in \mathcal{N}_\beta(\rho)$ with $\beta \in (\rho,\beta_+]$, then $\widehat{\Psi}_{\beta} \in \mathcal{N}_\beta $ with $\widehat{\Psi}_1(\beta-\rho)=0$.
\end{lemma}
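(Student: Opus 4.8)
The plan is to establish the two displayed identities by a direct computation on the L\'evy--Khintchine exponent, following and slightly extending \cite[Proposition~2.1]{PS12} so as to accommodate a non-zero killing rate $q$ and the endpoint value $\beta=\beta_{+}$, and then to deduce from this the assertions on $\widehat{\Psi}_{\beta}$. First I would fix $\beta\in(0,\beta_{+})$, where the hypothesis $|\Psi(\cdot)|<\infty$ on $(0,\beta_{+})$ makes $\Psi$ finite on that interval and, equivalently, analytic on the strip $\{z:0<\Re z<\beta_{+}\}$; then I would substitute $z+\beta$ for $z$ in \eqref{eq:lap-levy}, multiply by $z/(z+\beta)$, and reorganise $\mathcal{T}_{\beta}\Psi(z)=\frac{z}{z+\beta}\Psi(z+\beta)$, $z\in i\R$, back into the shape \eqref{eq:lap-levy}. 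Two elementary devices do this. First, the identity $\frac{z}{z+\beta}=1-\beta\int_{0}^{\infty}e^{-(z+\beta)t}\,dt$, valid on $i\R$ because $\beta>0$, removes the singular factor $1/(z+\beta)$ and, when applied to the killing term $-q$, produces after an affine change of variable exactly the absolutely continuous contribution $\beta q\,e^{\beta y}\mathbb{I}_{\{y<0\}}\,dy$ to the L\'evy measure in \eqref{LM-GenCKP1}. Second, integration by parts rewrites the $\Pi$-integrals as integrals against the tails $\overline{\Pi}_{\pm}$, and conversely; the boundary terms vanish because the integrand is $O(y^{2})$ at the origin and because $y\mapsto e^{\beta y}\overline{\Pi}_{+}(y)$, being non-increasing, is bounded at $+\infty$. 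Since $\frac{z}{z+\beta}(z+\beta)^{2}=z(z+\beta)$, the coefficient of $z^{2}$ is unchanged, so the Gaussian coefficient is still $\sigma$, and one reads off a drift $a_{\beta}$, the L\'evy measure \eqref{LM-GenCKP1}, and the value $\mathcal{T}_{\beta}\Psi(0)=q_{\beta}$ (so the killing parameter, in the parametrisation \eqref{eq:lap-levy}, is $-q_{\beta}$).

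Next I would verify that this candidate quadruplet is admissible, which is exactly where the three conditions imposed on $\beta$ are used. Rewriting \eqref{LM-GenCKP1}, on $\R^{+}$ one has $\Pi_{\beta}(dy)=-d(e^{\beta y}\overline{\Pi}_{+}(y))$, which is a non-negative measure precisely because $y\mapsto e^{\beta y}\overline{\Pi}_{+}(y)$ is non-increasing, while on $\R^{-}$ one has $\Pi_{\beta}(dy)=d(e^{\beta y}(q+\overline{\Pi}_{-}(y)))$, which is non-negative because $q\geq0$ (the function in question being then non-decreasing on $\R^{-}$). Near the origin $\Pi_{\beta}(dy)=e^{\beta y}\Pi(dy)+O(dy)$, so $\int_{\R}(1\wedge y^{2})\,\Pi_{\beta}(dy)<\infty$, and away from the origin $\Pi_{\beta}$ has finite mass because $e^{\beta y}\overline{\Pi}_{+}(y)$ admits a finite limit at $+\infty$ and $e^{\beta y}(q+\overline{\Pi}_{-}(y))\to0$ at $-\infty$. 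Finally the requirement $-\infty<q_{\beta}\leq0$ says precisely that $\mathcal{T}_{\beta}\Psi(0)=q_{\beta}$ is well defined and non-positive, i.e.\ that the killing parameter $-q_{\beta}$ is a genuine non-negative number. When $\beta=\beta_{+}$, where $\Psi$ need not converge on $\beta_{+}+i\R$, the quadruplet $(-q_{\beta_{+}},a_{\beta_{+}},\sigma,\Pi_{\beta_{+}})$ nonetheless defines an exponent of the form \eqref{eq:lap-levy}, all its ingredients being finite, and one identifies it with $\mathcal{T}_{\beta_{+}}\Psi$ by letting $\beta\uparrow\beta_{+}$.

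For the final assertion, suppose $\Psi\in\mathcal{N}_{\beta}(\rho)$ with $\beta\in(\rho,\beta_{+}]$. The defining conditions of $\mathcal{N}_{\beta}(\rho)$ are exactly the hypotheses used above, so $\mathcal{T}_{\beta}\Psi$, and hence its dual $\widehat{\Psi}_{\beta}(z):=\mathcal{T}_{\beta}\Psi(-z)$, is of the form \eqref{eq:lap-levy}, with killing parameter $-q_{\beta}$, Gaussian coefficient $\sigma$ and L\'evy measure $\Pi_{\beta}(-dy)$. Since $\rho<\beta\leq\beta_{+}$, the set $\{u\in(0,\beta):\Psi(u)=0\}$ is closed in $(0,\beta)$ and bounded above by $\rho<\beta$, whence $\Psi(\rho)=0$, and therefore $\widehat{\Psi}_{\beta}(\beta-\rho)=\mathcal{T}_{\beta}\Psi(\rho-\beta)=\frac{\rho-\beta}{\rho}\Psi(\rho)=0$. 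To see that $\widehat{\Psi}_{\beta}\in\mathcal{N}$: if $q_{\beta}<0$ the exponential functional is well defined through the killing; if $q_{\beta}=0$, the function $g(\theta):=\widehat{\Psi}_{\beta}(\theta)=\frac{-\theta}{\beta-\theta}\Psi(\beta-\theta)$ is finite and convex on $[0,\beta-\rho]$ with $g(0)=g(\beta-\rho)=0$ and $g\not\equiv0$ (otherwise $\Psi\equiv0$, which is impossible for $\Psi\in\mathcal{N}$), so that $g'(0^{+})<0$; since the negative tail of $\Pi_{\beta}$ is exponentially light, the dual process has integrable positive part, hence a well-defined mean equal to $g'(0^{+})\in[-\infty,0)$, and thus drifts to $-\infty$. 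Lastly, the explicit form of $\Pi_{\beta}$ on $\R^{-}$ gives $\int_{y<-1}e^{-uy}\,\Pi_{\beta}(dy)<\infty$ for every $u\in(0,\beta)$, i.e.\ condition \eqref{eq: exp mom Sato cond} holds for $\widehat{\Psi}_{\beta}$, so $\widehat{\Psi}_{\beta}\in\mathcal{N}_{\beta}$.

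The main obstacle is the bookkeeping in the first two steps: carrying the compensator term $-zy\mathbb{I}_{\{|y|<1\}}$ and the killing term through the integration-by-parts identities, and, above all, the endpoint case $\beta=\beta_{+}$, where $\Psi$ may fail to converge on $\beta_{+}+i\R$ and where the ``missing mass'' $\lim_{y\to\infty}e^{\beta_{+}y}\overline{\Pi}_{+}(y)$ resurfaces inside the killing rate $-q_{\beta_{+}}$. The three structural hypotheses on $\beta$ are imposed precisely so that every object produced by the formal manipulation---the L\'evy measure, its integrability near $0$ and at $\infty$, and the killing rate---is legitimate.
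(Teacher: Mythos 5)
For $\beta\in(0,\beta_+)$ your direct computation is exactly the content of \cite[Proposition 2.1]{PS12}, which the paper simply invokes, and your verification that \eqref{LM-GenCKP1} is a genuine L\'evy measure — writing $\Pi_\beta(dy)=-d\left(e^{\beta y}\overline{\Pi}_{+}(y)\right)$ on $\R^+$ and $\Pi_\beta(dy)=d\left(e^{\beta y}(q+\overline{\Pi}_{-}(y))\right)$ on $\R^-$ — is the right use of the monotonicity hypothesis. One small slip: near the origin the absolutely continuous part of $\Pi_\beta$ is not $O(dy)$ in general, since $\overline{\Pi}_{\pm}(y)$ may blow up as $y\to0$ (stable-type measures); the integrability $\int(1\wedge y^2)\Pi_\beta(dy)<\infty$ still holds, but via $\int_0^1 y^2\,\overline{\Pi}_{+}(y)\,dy<\infty$ (Fubini against $\int(1\wedge y^2)\Pi(dy)<\infty$), not via a bounded density. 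Your treatment of the final assertion — $\widehat{\Psi}_\beta(\beta-\rho)=\frac{\rho-\beta}{\rho}\Psi(\rho)=0$, killing when $q_\beta<0$, and when $q_\beta=0$ convexity with the two zeros $0$ and $\beta-\rho$ forcing $\widehat{\Psi}_\beta'(0^+)<0$, plus the exponential-moment check for the dual from the explicit negative tail of $\Pi_\beta$ — is essentially the paper's own argument (the paper computes $\widehat{\Psi}_\beta'(0)=-\Psi(\beta)/\beta<0$ for $\beta<\beta_+$ and uses convexity at the endpoint).

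The genuine gap is the endpoint $\beta=\beta_+$, which is the only case the paper actually has to prove. When $|\Psi(\beta_+)|=\infty$, the function $\mathcal{T}_{\beta_+}\Psi$ on $i\R$ only makes sense as the pointwise limit of $\mathcal{T}_\beta\Psi$ as $\beta\uparrow\beta_+$, and the whole issue is to show that this limit is again of the form \eqref{eq:lap-levy}, now with killing rate $-q_{\beta_+}\geq 0$. Your sentence ``one identifies it with $\mathcal{T}_{\beta_+}\Psi$ by letting $\beta\uparrow\beta_+$'' is precisely the missing step, and it is not routine: under the convergence $\Pi_\beta\to\Pi_{\beta_+}$ a mass $c=\lim_{y\to\infty}e^{\beta_+ y}\overline{\Pi}_{+}(y)$ escapes to $+\infty$ and must be shown to resurface as the killing rate (in the model case $\overline{\Pi}_{+}(y)=ce^{-\beta_+ y}$ one indeed finds $q_{\beta_+}=-c$), the drifts must be shown to converge, and the tail integrals $\int_A^\infty (e^{zy}-1)\Pi_\beta(dy)$ converge only conditionally for $z\in i\R$, so an Abel-summation or second-mean-value argument is needed; you flag this as ``the main obstacle'' but do not supply the argument. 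The paper sidesteps all of this with a soft argument: for each $\beta<\beta_+$ the function $-\mathcal{T}_\beta\Psi(-\cdot)$ is continuous negative definite, the cone of negative definite functions is closed under pointwise convergence (\cite[Lemma 3.6.7]{J01}), and a short quadratic-form computation shows that subtracting the non-positive constant $q_{\beta_+}$ preserves negative definiteness; hence $\mathcal{T}_{\beta_+}\Psi$ is the exponent of a possibly killed L\'evy process without ever tracking where the mass went. To complete your route you must either carry out the limiting identification of the quadruplet $(-q_{\beta_+},a_{\beta_+},\sigma,\Pi_{\beta_+})$ in detail, or import the paper's closure argument for the endpoint.
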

\begin{remark}
  Note that when $|\Psi(\beta)|<\infty $ then immediately  $q_{\beta}=0$. Moreover, the situation $|\Psi(\beta_+)|=\infty $ is allowed  if $0$ is a removable singularity for  $\mathcal{T}_{\beta_+}\Psi$ with $q_{\beta_+}=\mathcal{T}_{\beta_+}\Psi(0)\leq 0$.
  \end{remark}
\begin{proof}
For any $\beta \in (0,\beta_+)$, since in this case plainly $q_{\beta}=0$,  the claim is given in \cite[Proposition 2.1]{PS12} and thus it remains to prove it only for $\beta=\beta_+$.  Note also that the expression of the characteristics of $\mathcal{T}_{\beta_+}\Psi$ follows from this aforementioned result and we now show that it is indeed a characteristic exponent of a L\'evy process. To this end,  we recall a few properties of the set of all negative definite functions $N(\R)$ and the set of all continuous negative definite functions denoted by $CN(\R)$ and refer to the monograph \cite{J01} for an excellent account  on these sets of functions.
A function $f:\R \longrightarrow \C$ is an element in $N(\R)$ if and only if the following conditions are fulfilled
$f(0) \geq 0$, $f(z)= \overline{ f(-z)}$, and for any $k \in \N$ and any choice of values $z^1, \cdots, z^k \in \R$ and complex numbers $c_1, \cdots, c_k$
\begin{equation}\label{eq:NDF}
\sum^{k}_{j=1} c_j=0  \mbox{ implies that } \sum^{k}_{j,l=1} f(z^j-z^l)c_j  \overline{c_l} \leq 0.
\end{equation}
It is easy to verify that $-\Psi(-z) \in CN(\R)$, $z \in i\R$ and it is also well-known that any element of $CN(\R)$ can be written as the negative of a characteristic function of a L\'evy process. Now, we have, for any $\beta \in (0,\beta_+)$,  $\mathcal{T}_{\beta} \Psi \left(z\right)$, $z \in i\R$, is the characteristic exponent of a conservative ($q_{\beta}=0$) L\'evy process. Denote
\begin{equation}
\mathcal{T}_{\beta^{+}} \Psi =  \lim_{\beta \rightarrow \beta^{+}} \mathcal{T}_{\beta} \Psi.
\end{equation}
Note that $\mathcal{T}_{\beta^{+}} \Psi (0):=\lim_{u\uparrow 0}\mathcal{T}_{\beta}\Psi\left(u\right)= q_{\beta}$ which is a non-negative constant by assumption. Then, let
  \begin{equation*}
  \Phi(z)=\left\{
  \begin{array}{ll}
  - \mathcal{T}_{\beta^{+}} \Psi(z)& \mbox{ if } z\neq0 \\
  0 & \mbox{ if } z=0.
  \end{array}
  \right.
    \end{equation*}
 Then $  \Phi$ is an element of  $N(\R)$ since we know from \cite[Lemma 3.6.7, p.123]{J01} that  the set $N(\R)$ is a convex cone which is closed under pointwise convergence. Let further
   \begin{equation*}
  \widetilde{\Phi}(z)=\left\{
  \begin{array}{ll}
   -q_{\beta}+\Phi(z)& \mbox{ if } z\neq0 \\
  -q_{\beta} & \mbox{ if } z=0,
  \end{array}
  \right.
    \end{equation*}
    where we recall that $q_{\beta}\leq0$.
For any $k \in \N$ and any choice of values $z^1, \cdots, z^k \in \R$ and complex numbers $c_1, \cdots, c_k$ with $\sum^{k}_{j=1} c_j=0$, since $  \Phi \in N(\R)$ and $-q_{\beta} \in N(\R)$, we have
\begin{align*} \sum^{k}_{j,l=1}   \widetilde{\Phi}(z^j-z^l)c_j  \overline{c_l}  & =  \sum^{k}_{j,l=1; z^j \neq z^l}   \Phi(z^j-z^l)c_j  \overline{c_l} - q_{\beta} \sum^{k}_{j,l=1; z^j \neq z^l}  c_j
\overline{c_l} - q_{\beta} \sum^{k}_{j,l=1;z^j=z^l}  c_j
\overline{c_l} \\
& =  \sum^{k}_{j,l=1}   \Phi(z^j-z^l)c_j  \overline{c_l} - q_{\beta} \sum^{k}_{j,l=1}  c_j
\overline{c_l} \leq \sum^{k}_{j,l=1}   \Phi(z^j-z^l)c_j  \overline{c_l} \leq 0.
\end{align*}
Hence $\widetilde{\Phi} \in N(\R)$ and by continuity,  $\mathcal{T}_{\beta^{+}} \Psi$  is the characteristic exponent of a possibly killed L\'evy process. Next, let us assume that, in addition, $\Psi \in \mathcal{N}_\beta(\rho)$ with $\beta \in (\rho,\beta_+]$, then writing $\widehat{\Psi}_{\beta}= \widehat{\mathcal T_{\beta}\Psi}$, $\widehat{\Psi}_{\beta}$ is the characteristic exponent of the dual L\'evy process associated to ${\mathcal{T}_{\beta}\Psi}$ and, as  $\widehat{\Psi}_{\beta}(z)=\frac{z}{z-\beta}\Psi(-z+\beta)$, we have that  $\widehat{\Psi}_{\beta}(z)$ is analytical on the strip $\mathbb{C}_{(0,\beta)}$ and  $ \widehat{\Psi}_{\beta}(\beta-\rho)=0$. Moreover, since for $u \in  (0,\beta)$, $\widehat{\Psi}_{\beta}'(u)=-\frac{u}{u-\beta}\Psi'(-u+\beta)-\frac{\beta}{(u-\beta)^2}\Psi(-u+\beta)$, we get, if $\beta \in (\rho,\beta_+)$, that  $\widehat{\Psi}_{\beta}'(0)=-\frac{\Psi(\beta)}{\beta}<0$ as $\beta>\rho$, either $\Psi(0)<0$ or $\Psi'(0^+)<0$ and $\Psi$ is convex on $(0,\beta)$. Hence, in this case,   $\widehat{\Psi}_{\beta} \in \mathcal{N}_\beta(\beta-\rho) $. Finally, if $q_{\beta_+}<0$ then clearly $\widehat{\Psi}_{\beta_+} \in \mathcal{N}_{\beta_+}(\beta-\rho) $ whereas if $q_{\beta_+}=0$  we complete the proof by recalling  that $ \widehat{\Psi}_{\beta_+}(\beta_+-\rho)=0$ with $\beta_+-\rho>0$ and the convexity of $\widehat{\Psi}_{\beta}$.
\end{proof}
Next, we denote by $\mathcal{M}_{{\rm{I}}}$  the Mellin transform of  a random variable ${\rm{I}}$, that  is, for $z\in \C$,
\begin{equation*}
  \mathcal{M}_{{\rm{I}}}(z)=\E[{\rm{I}}^{z-1}]
\end{equation*}
and, recall that $\mathcal{M}_{{\rm{I}}}$ is (on its domain of definition) a positive-definite function. We proceed by recalling a few basic facts about the generalized Pareto random variable, which we denote by $\P_{a,b}, a,b>0$, that will be useful in the sequel of the proof. It can be defined via the identity
\begin{equation} \label{pareto-gamma}
\P_{a,b}  \stackrel{\left(d\right)}{=} \frac{\G_ {b}}{\G_ {a}}
\end{equation}
where  the two variables, which we recall are gamma variables of parameter $b$ and $a$ respectively,  are considered independent. We recall that the law of $\G_{a}$ is absolutely continuous with the following  density
\begin{equation*}
\frac{1}{\Gamma(a)} x^{a-1}e^{-x}, \: x>0. 
\end{equation*}
$\P_{a,b}$ is also a positive variable whose law is absolutely continuous with a density given by
\begin{equation*}
\frac{1}{B(a,b)} x^{b-1} \left(1+x\right)^{-a-b}, \: x>0, 
\end{equation*}
where $B(a,b)= \frac{\Gamma(a) \Gamma(b)}{\Gamma(a+b)}$ is the Beta function. The Mellin transform of $\P_{a,b}$ is given by
\begin{equation} \label{Pareto-kth moment}
\E\left[ \P_{a,b}^{z}\right]=\frac{\Gamma(a-z) \Gamma(b+z)}{\Gamma(a) \Gamma(b)}, \ -b<\Re(z)<a.
\end{equation}
From \eqref{Pareto-kth moment} it is easy to see that $\P_{a,b}$ admits moments of order $u$ for any $u \in \left(-b,a \right)$. In particular $\P_{a,b}$ has infinite mean whenever $a \leq 1$.
We refer to \cite{JKB95} for a nice exposition on Pareto variables.
When $\rho=a=1-b$, we write simply  $\P_{\rho}=\P_{\rho,1-\rho}$ for the standard Pareto which  is linked to the generalized arc-sine law $\A_{\rho}  $ of order $\rho$ in the following way
\begin{equation} \label{eq:Pareto-Arc-sine}
(1+\P_{\rho})^{-1} \stackrel{\left(d\right)}{=} \A_{\rho}.
\end{equation}
Simple algebra yields, from the identity \eqref{id-Doney},  the following factorizations of the Pareto law
\begin{equation} \label{id-Doney-pareto and id-Bessel process-Pareto}
	        \frac{\widehat{M}_{\rho}^{\alpha}}{\M_{\rho}^{\alpha}} \stackrel{\left(d\right)}{=} \P_{\rho}.
	\end{equation}
Inspired by this reasoning, we shall prove the factorization of the Pareto variable in terms of exponential functionals of L\'evy processes. To this end, we shall need  the following characterizations of the Mellin transform of the Pareto variable $\P_{\rho}$.
\begin{lemma}\label{lem:par}
For any $0<\rho<1$, we have
\begin{equation}\label{eq:fe}
 \mathcal{M}_{\P_{\rho}}(z+1)=\frac{\Gamma(z+1-\rho)}{\Gamma(1-\rho)}\frac{\Gamma(-z+\rho)}{\Gamma(\rho)}
\end{equation}
which defines an analytical function on the strip $\C_{(\rho-1,\rho)}$  with simple poles at the edges of its domain of analyticity, that is at the points $\rho$ and $\rho-1$.
Moreover, it is the unique positive-definite function solution to the recurrence equation
\begin{equation}\label{eq:fexp}
 \mathcal{M}_{{\P_{\rho}}}(z+1)=- \mathcal{M}_{\P_{\rho}}(z), \quad  \mathcal{M}_{\P_{\rho}}(1)=1.
\end{equation}
\end{lemma}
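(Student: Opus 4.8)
The plan is to verify the closed form \eqref{eq:fe} directly and then address the uniqueness claim, which is the substantive part. First I would observe that \eqref{eq:fe} is an immediate rewriting of the Mellin transform \eqref{Pareto-kth moment} of $\P_{a,b}$ in the special case $a=\rho$, $b=1-\rho$: indeed $\E[\P_{\rho}^{z}]=\frac{\Gamma(\rho-z)\Gamma(1-\rho+z)}{\Gamma(\rho)\Gamma(1-\rho)}$, and substituting $z\mapsto z$ into $\mathcal{M}_{\P_{\rho}}(z+1)=\E[\P_{\rho}^{z}]$ gives the stated identity. The domain of analyticity $\C_{(\rho-1,\rho)}$ and the location of the poles then follow from the poles of the Gamma function: $\Gamma(z+1-\rho)$ has its first pole at $z=\rho-1$ and $\Gamma(-z+\rho)$ has its first pole at $z=\rho$, while both are analytic and nonzero on the open strip in between; since $0<\rho<1$ this strip is nonempty and contains $0$. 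The fact that these are \emph{simple} poles is just the simplicity of the poles of $\Gamma$. The recurrence \eqref{eq:fexp} is then checked using the functional equation $\Gamma(w+1)=w\Gamma(w)$: we have $\mathcal{M}_{\P_{\rho}}(z+1)/\mathcal{M}_{\P_{\rho}}(z)=\frac{\Gamma(z+1-\rho)}{\Gamma(z-\rho)}\cdot\frac{\Gamma(-z+\rho)}{\Gamma(-z+1+\rho)}=(z-\rho)\cdot\frac{1}{-z+\rho}=-1$, and the normalization $\mathcal{M}_{\P_{\rho}}(1)=\E[\P_{\rho}^{0}]=1$ is trivial.

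For the uniqueness assertion I would argue as follows. Suppose $g$ is any positive-definite function with $g(1)=1$ solving $g(z+1)=-g(z)$. Positive-definiteness (of a Mellin transform, i.e.\ on the appropriate vertical line, via Bochner's theorem, $g$ is the Mellin transform of a probability measure) forces $g$ to be continuous and bounded on $i\R$ shifted appropriately, and in particular $g(1+it)$ is the characteristic function of $\log$ of a positive random variable up to the standard change of variables. The key point is that the ratio $r(z)=g(z+1)/\mathcal{M}_{\P_{\rho}}(z+1)$ satisfies $r(z+1)=r(z)$, i.e.\ it is $1$-periodic, and $r(1)=1$. A $1$-periodic function that arises as a ratio of two positive-definite (hence, in modulus, suitably controlled) functions and is analytic where both are, must be constant: this is the Carlson/Liouville-type rigidity argument used throughout the exponential-functional literature (see \cite{Patie-Savov-Bern}), exploiting that a bounded $1$-periodic entire — or meromorphic-with-controlled-growth — function whose ``Carlson condition'' holds is constant. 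Concretely, $\mathcal{M}_{\P_\rho}$ has no zeros in its strip and mild exponential decay along vertical lines (from Stirling), so $r$ extends to an entire $1$-periodic function of at most order one and finite type in the imaginary direction, hence bounded on horizontal strips of width one, hence bounded, hence constant by Liouville after passing through $e^{2\pi i z}$; then $r\equiv r(1)=1$.

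The main obstacle I anticipate is making the uniqueness step fully rigorous: one must ensure the hypothesis ``positive-definite'' is exactly strong enough to pin down the growth of $g$ along vertical lines so that the periodic ratio $r$ is genuinely of Carlson type (sub-exponential growth in the imaginary direction on each vertical line, together with boundedness in the real direction). If the paper intends ``positive-definite'' to mean ``the Mellin transform of a probability measure on $(0,\infty)$'', then $|g(1+it)|\le g(1)=1$ gives boundedness on that line; combined with the recurrence this propagates to boundedness of $g$ on the whole strip $\Re(z)\in(\rho-1,\rho)$ (since $|g(z+1)|=|g(z)|$), and then the standard Mellin-transform uniqueness machinery of \cite{Patie-Savov-Bern} applies verbatim. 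I would therefore structure the written proof as: (i) derive \eqref{eq:fe} and its analytic properties from \eqref{Pareto-kth moment} and Stirling; (ii) verify \eqref{eq:fexp} by the Gamma recurrence; (iii) invoke the uniqueness of positive-definite solutions to such recurrence equations, citing the relevant statement in \cite{Patie-Savov-Bern}, checking only that the normalization and the growth hypothesis are met.
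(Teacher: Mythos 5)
Your proposal is correct and takes essentially the same approach as the paper: the identity and the recurrence are read off from the Gamma representation and $\Gamma(w+1)=w\Gamma(w)$, and uniqueness is settled, as in the paper, by combining the Stirling decay of $\mathcal{M}_{\P_{\rho}}$ along vertical lines with the boundedness of Mellin transforms of random variables on the line $1+i\R$ and a Carlson-type theorem on the growth of $1$-periodic functions (Titchmarsh, Section 5.81). The paper simply cites that theorem where you sketch the periodic-ratio/Liouville mechanism explicitly, so there is no genuine difference or gap.
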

\begin{proof}
First,
from the definition \eqref{pareto-gamma} of the Pareto variable, one has, for any $0<\rho<1$ and $b>0$,
\begin{equation}\label{eq:fe}
 \mathcal{M}_{\P_{\rho,b}}(z+1)=\mathcal{M}_{\G_ {b}}(z+1)\mathcal{M}_{\G_ {\rho}}(-z+1)=\frac{\Gamma(z+b)}{\Gamma(b)}\frac{\Gamma(-z+\rho)}{\Gamma(\rho)},
\end{equation}
and thus, as $\P_{\rho}=\P_{\rho,1-\rho}$,
\begin{equation}\label{eq:fe}
 \mathcal{M}_{\P_{\rho}}(z+1)=\frac{\Gamma(z+1-\rho)}{\Gamma(1-\rho)}\frac{\Gamma(-z+\rho)}{\Gamma(\rho)}.
\end{equation}
As a by-product of classical  properties of the gamma function, one gets that  $z \mapsto \mathcal{M}_{\P_{\rho}}(z+1)$ defines an analytical function on the strip $\C_{(\rho-1,\rho)}$ and which extends as a meromorphic function on $\C$ with simple poles at the points $\rho+n$ and $\rho-1-n$, $n\in \N$. Then, using the recurrence relation of the gamma function $\Gamma(z+1)=z\Gamma(z), z \in \C$, 
we deduce that, for any $z\in \C_{(\rho-1,\rho)}$,
\begin{equation}\label{eq:fe}
 \mathcal{M}_{\P_ {\rho}}(z+1)=\frac{z-\rho}{-z+\rho}\frac{\Gamma(z-\rho)}{\Gamma(1-\rho)}\frac{\Gamma(-z+1+\rho)}{\Gamma(\rho)}=-\mathcal{M}_{\P_ {\rho}}(z),
\end{equation}
which is easily seen to be valid, in fact, for all $z\in \C$.
Next,  we recall that the Stirling's formula yields that for any $a \in \R$ fixed,
\begin{equation}\label{eq:gamma_stir}
\lim_{|b| \rightarrow \infty}|\Gamma(a+ib)| |b|^{-a+\frac{1}{2}} e^{|b|\frac{\pi}{2}} = C_a
\end{equation}
where $C_a > 0$, see e.g.~\cite{Lebedev72}. This combines with \eqref{eq:fe} entails that  for large $|b|$,
\begin{equation*}
|\mathcal{M}_{\P_ {\rho}}(1+i|b|)| \leq \overline{C}_\rho |b|^{-\frac{1}{2}} e^{-|b|}.
\end{equation*}
Since the Mellin transform of a random variable is bounded on the line $1+i\R$,  the uniqueness of the recurrence equation follows readily by an application of Carlson's theorem on the growth of periodic functions, see \cite[Section 5.81]{Titch}.
\end{proof}

We state the following result  which is proved in \cite{Patie-Savov-Bern} regarding the recurrence equation solved by the Mellin transform of the exponential functional  ${\rm{I}}_{\Psi}$ for a general L\'evy process.  Note that the exponential functional is defined in the aforementioned paper with $\widehat{\xi}=-\xi$, that is for $ \widehat{\Psi}(-z)=\Psi(z)$.

\begin{lemma}{\cite[Theorem 2.4]{Patie-Savov-Bern}.} \label{lem:recef}
For any $\Psi \in \mathcal N$, $\mathcal{M}_{{\rm{I}}_{\Psi}}$  is the unique positive-definite function solution to the functional equation
\begin{equation}\label{eq:fexp}
 \mathcal{M}_{{\rm{I}}_{\Psi}}(z+1)=\frac{-z}{\Psi(z)} \mathcal{M}_{{\rm{I}}_{\Psi}}(z), \quad  \mathcal{M}_{{\rm{I}}_{\Psi}}(1)=1,
\end{equation}
which is valid (at least) on the dashed line $\mathcal Z^c_0(\Psi)\setminus \{0\}$, where we set  $\mathcal Z_0(\Psi)=\{z\in i\R;\,\Psi(z)=0\}$. If $\Psi \in \mathcal N_1({\rho})$, $0<\rho<1$, the  validity of the recurrence equation  \eqref{eq:fexp} extends  to $\C_{(0,2)}\cup \mathcal Z^c_0(\Psi) \cup \mathcal Z^c_0(\Psi(.+1))$ and $\mathcal{M}_{{\rm{I}}_{\Psi}}(z+1)$ is analytical on the strip $\C_{(-1,\rho)}$ and meromorphic on $\C_{(-1,1)}$ with $\rho$ as unique simple pole.
\end{lemma}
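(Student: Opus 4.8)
The plan is to prove Theorem \ref{mainthm} by combining the invariance property from Lemma \ref{lemma-lintrans}, the characterization of the Pareto Mellin transform from Lemma \ref{lem:par}, and the recurrence-equation characterization of $\mathcal{M}_{{\rm{I}}_{\Psi}}$ from Lemma \ref{lem:recef}. The first assertion of the theorem, namely that $\widehat{\Psi}_1(z)=\Psi_1(-z)\in\mathcal{N}_1$ with $\widehat{\Psi}_1(1-\rho)=0$, is exactly the final claim of Lemma \ref{lemma-lintrans} applied with $\beta=\beta_+=1$: one checks that $\Psi\in\mathcal{N}_1(\rho)$ provides precisely the hypotheses (the monotonicity of $y\mapsto e^{y}\overline{\Pi}_+(y)$ and the condition $\infty<\lim_{u\uparrow0}u\Psi(u+1)\le0$), so $\mathcal{T}_1\Psi=\Psi_1$ is a genuine L\'evy--Khintchine exponent, and its dual $\widehat{\Psi}_1$ lies in $\mathcal{N}_1$ with a root at $1-\rho$. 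So the real content is the distributional identity \eqref{thm-Arc}.

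For \eqref{thm-Arc}, I would work with the ratio $R=\dfrac{{\rm{I}}_{\widehat{\Psi}_1}}{{\rm{I}}_{\Psi}}$ of the two independent exponential functionals and show $R\stackrel{(d)}{=}\P_\rho$; then \eqref{eq:Pareto-Arc-sine}, which says $(1+\P_\rho)^{-1}\stackrel{(d)}{=}\A_\rho$, gives the first identity in \eqref{thm-Arc}, and interchanging the roles of the numerator and denominator (using $\P_\rho^{-1}\stackrel{(d)}{=}\P_{1-\rho}$, immediate from \eqref{Pareto-kth moment}) gives the second. By independence, $\mathcal{M}_R(z+1)=\mathcal{M}_{{\rm{I}}_{\widehat{\Psi}_1}}(z+1)\,\mathcal{M}_{{\rm{I}}_{\Psi}}(1-z)$ on a suitable vertical strip. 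The strategy is to show that this product satisfies the recurrence equation \eqref{eq:fexp} of Lemma \ref{lem:par} characterizing $\mathcal{M}_{\P_\rho}(z+1)$, i.e. that it equals $-1$ times its shift and takes value $1$ at $z=0$, and that it is a positive-definite function; uniqueness via Carlson's theorem (already invoked in Lemma \ref{lem:par}) then forces $R\stackrel{(d)}{=}\P_\rho$. To verify the recurrence, I apply \eqref{eq:fexp} of Lemma \ref{lem:recef} to $\Psi$ at the point $1-z$, giving $\mathcal{M}_{{\rm{I}}_{\Psi}}(2-z)=\frac{z-1}{\Psi(1-z)}\mathcal{M}_{{\rm{I}}_{\Psi}}(1-z)$, and to $\widehat{\Psi}_1$ at the point $z+1$, giving $\mathcal{M}_{{\rm{I}}_{\widehat{\Psi}_1}}(z+2)=\frac{-z-1}{\widehat{\Psi}_1(z+1)}\mathcal{M}_{{\rm{I}}_{\widehat{\Psi}_1}}(z+1)$. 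Since $\widehat{\Psi}_1(z+1)=\Psi_1(-z-1)=\frac{-z-1}{-z}\Psi(-z)$ and, running the same relation on the "other side", the factor $\Psi(-z)$ cancels against the corresponding factor coming from $\mathcal{M}_{{\rm{I}}_{\Psi}}$ and all the algebraic prefactors collapse to a single $-1$; the key algebraic miracle is that the Lamperti-type transform $\mathcal{T}_1$ is exactly designed so that $\Psi_1(z)=\frac{z}{z+1}\Psi(z+1)$ makes these cancellations happen. The normalization $\mathcal{M}_R(1)=\mathcal{M}_{{\rm{I}}_{\widehat{\Psi}_1}}(1)\mathcal{M}_{{\rm{I}}_{\Psi}}(1)=1\cdot1=1$ is immediate, and positive-definiteness holds because $\mathcal{M}_R$ is a Mellin transform of a genuine positive random variable.

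The main obstacle I anticipate is purely analytic bookkeeping rather than any conceptual difficulty: one must be careful about the exact strips of analyticity on which each Mellin transform and each instance of the functional equation is valid. Lemma \ref{lem:recef} tells us $\mathcal{M}_{{\rm{I}}_{\Psi}}(z+1)$ is analytical on $\C_{(-1,\rho)}$ and meromorphic on $\C_{(-1,1)}$ with a single simple pole at $\rho$; I need the analogous statement for $\mathcal{M}_{{\rm{I}}_{\widehat{\Psi}_1}}(z+1)$, which follows from Lemma \ref{lem:recef} applied to $\widehat{\Psi}_1\in\mathcal{N}_1(1-\rho)$ (note $0<1-\rho<1$), giving analyticity on $\C_{(-1,1-\rho)}$ with simple pole at $1-\rho$. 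Putting $z\mapsto -z$ in the second, the product $\mathcal{M}_{{\rm{I}}_{\widehat{\Psi}_1}}(z+1)\mathcal{M}_{{\rm{I}}_{\Psi}}(1-z)$ is then analytical at least on the overlapping strip $\C_{(\rho-1,\rho)}$ — precisely the domain of analyticity of $\mathcal{M}_{\P_\rho}(z+1)$ claimed in Lemma \ref{lem:par} — and the simple poles at $z=\rho$ and $z=\rho-1$ match on both sides. Establishing that the recurrence \eqref{eq:fexp} holds initially on a nonempty strip (say a neighbourhood of the imaginary axis, using the validity of \eqref{eq:fexp} on $\mathcal{Z}^c_0$ and on $\C_{(0,2)}$ as stated in Lemma \ref{lem:recef}) and then extends by analytic continuation to all of $\C$ completes the argument; the boundedness on the line $1+i\R$ needed for Carlson's theorem is automatic for a Mellin transform of a probability law, so uniqueness applies verbatim as in the proof of Lemma \ref{lem:par}. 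Corollaries \ref{cor:par}, \ref{cor:sr} and \ref{cor:s2} then follow by specialization and by invoking the known structural results (hyperbolic complete monotonicity of $\P_\rho$, the self-reciprocity at $\rho=\tfrac12$ identifying $\P_{1/2}$ with $\mathcal C^2$, and the explicit Lamperti-stable and stable computations).
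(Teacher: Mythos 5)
Your proposal does not prove the statement under review. The statement is Lemma \ref{lem:recef} itself --- the assertion that $\mathcal{M}_{{\rm{I}}_{\Psi}}$ is the unique positive-definite solution of the functional equation \eqref{eq:fexp}, together with the domain of validity of that equation and the analyticity/meromorphy of $\mathcal{M}_{{\rm{I}}_{\Psi}}(z+1)$ on $\C_{(-1,\rho)}$ and $\C_{(-1,1)}$ with the single simple pole at $\rho$. Your text instead takes all of this as given and uses it (together with Lemmas \ref{lemma-lintrans} and \ref{lem:par}) to derive Theorem \ref{mainthm}; nowhere do you argue why the recurrence $\mathcal{M}_{{\rm{I}}_{\Psi}}(z+1)=\frac{-z}{\Psi(z)}\mathcal{M}_{{\rm{I}}_{\Psi}}(z)$ holds, why a positive-definite solution is unique, or why the stated extension of its domain and the pole structure are as claimed. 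In the paper this lemma carries no proof either: it is imported verbatim from Patie and Savov (Theorem 2.4 of the Bernstein-gamma paper), where it is established via an explicit construction of the solution from the Wiener--Hopf factorization of $\Psi$ in terms of Bernstein-gamma functions --- machinery that is entirely absent from your proposal. So, as a proof of the quoted lemma, there is a complete gap: the statement is assumed, not established.

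For what it is worth, read as a sketch of the proof of Theorem \ref{mainthm}, your argument does follow the paper's route essentially step for step: form the ratio of the two independent exponential functionals, multiply the Mellin transforms, observe that the choice $\widetilde{\Psi}(-z)=\mathcal{T}_1\Psi(z)$ collapses the two instances of \eqref{eq:fexp} to the single recurrence $\mathcal{M}(z+1)=-\mathcal{M}(z)$, and conclude by the uniqueness of Lemma \ref{lem:par} and the identity \eqref{eq:Pareto-Arc-sine}. One small caution there: the paper's conclusion \eqref{eq:Par} is $\frac{{\rm{I}}_{\Psi}}{{\rm{I}}_{\widehat{\Psi}_{1}}}\stackrel{(d)}{=}\P_{\rho}$, so that $(1+\P_\rho)^{-1}$ matches $\frac{{\rm{I}}_{\widehat{\Psi}_{1}}}{{\rm{I}}_{\widehat{\Psi}_{1}}+{\rm{I}}_{\Psi}}$ as in \eqref{thm-Arc}; your choice $R=\frac{{\rm{I}}_{\widehat{\Psi}_1}}{{\rm{I}}_{\Psi}}\stackrel{(d)}{=}\P_\rho$ has the two functionals interchanged and would swap the roles of $\A_\rho$ and $\A_{1-\rho}$. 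But none of this addresses the lemma you were asked to prove.
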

We mention that in \cite[Theorem 2.1]{Patie-Savov-Bern}, an explicit representation of the solution on a strip of the functional equation \eqref{eq:fexp} is provided in terms of the co-called Bernstein-gamma functions. This representation turns out to be very useful to provide substantial distributional properties, such as a Wiener-Hopf factorization, smoothness, small and large asymptotic behaviors, of the exponential functional of any L\'evy processes, see Section 2 of the aforementioned paper.

We are now ready to complete the proof of Theorem \ref{mainthm}.  Let  $\Psi \in \mathcal N_1({\rho})$ and $\widetilde{\Psi} \in \mathcal N$ and define the random variable  $\overline{\rm{I}}= \frac{{\rm{I}}_{\Psi}}{{\rm{I}}_{\widetilde{\Psi}}}$, where we assumed that the exponential functionals ${\rm{I}}_{\Psi}$ and ${\rm{I}}_{\widetilde{\Psi}}$ are independent variables. Then, plainly
\begin{equation}\label{eq:fer0}
 \mathcal{M}_{\overline{\rm{I}}}(z+1)=\mathcal{M}_{{\rm{I}_\Psi}}(z+1)\mathcal{M}_{{\rm{I}}_{\widetilde\Psi}}(-z+1)
\end{equation}
and,  Lemma \ref{lem:recef} yields, after a shift by $1$,  and with  $\mathcal{M}_{\overline{\rm{I}}}(1)=1$, that
\begin{equation*}
 \mathcal{M}_{\overline{\rm{I}}}(z+2)=\frac{-z-1}{\Psi(z+1)}\frac{\widetilde{\Psi}(-z)}{z} \mathcal{M}_{\overline{\rm{I}}}(z+1),
\end{equation*}
for (at least) any $z$ on the dashed line  $\mathcal Z^c_0(\Psi(.+1)\cap \mathcal Z^c_0(\widetilde\Psi)\setminus \{0,1\}$.
Therefore, if  one chooses $\widetilde{\Psi}$ of the form
\begin{align*}
\widetilde{\Psi}(-z)& = \frac{z+1}{z} \Psi(z+1),
\end{align*}
that is
$ \widetilde{\Psi}(-z)= \mathcal{T}_{1}\Psi(z)$ or $\widetilde{\Psi}(z)=\widehat{ \mathcal{T}_{1}\Psi}(z)=\widehat{\Psi}_1(z)$, one gets that $\widehat{\Psi}_1 \in \mathcal{N}_1(1-\rho)$ and thus according to Lemma \ref{lem:recef}, $\mathcal{M}_{{\rm{I}}_{\widehat{\Psi}_1}}(-z+1)$ is analytical on the strip $\C_{(1-\rho,1)}$ with $1-\rho$ as a simple pole.  Then we obtain, from \eqref{eq:fer0}, that $ \mathcal{M}_{\overline{\rm{I}}}(z+1)$ is analytical on the strip $\C_{(1-\rho,\rho)}$ with $1-\rho$ and $\rho$ as simple poles and it is solution to the recurrence equation
\begin{equation*}
 \mathcal{M}_{\overline{\rm{I}}}(z+1)=-\mathcal{M}_{\overline{\rm{I}}}(z).
\end{equation*}
Since plainly $\mathcal{M}_{\overline{\rm{I}}}$ is a positive-definite function we conclude   by the uniqueness claimed in Lemma \ref{lem:par} that
\begin{equation} \label{eq:Par}
\frac{{\rm{I}}_{{\Psi}}}{{\rm{I}}_{{\widehat{\Psi}_{1}}}} \stackrel{\left(d\right)}{=} \P_{\rho}.
\end{equation}
Invoking the identity \eqref{eq:Pareto-Arc-sine} completes the proof.

\subsection{Proof of Corollary \ref{cor:par}}
First, from \eqref{eq:Par}, we deduce easily that  $\frac{{\rm{I}}_{{\widehat{\Psi}_{1}}}}{{\rm{I}}_{{\Psi}}}$ has the same law as $\P_{1-\rho}$.  The fact that the density of the Pareto variable is hyperbolic completely monotone was proved in \cite{Bond}.
Moreover, Berg \cite{Berg} showed that $ \log \G_a$ is infinitely divisible for any $a>0$, we conclude the proof by recalling that the set of infinitely divisible variables  is closed by linear combination of independent variables.  Finally, the last claim follows readily from the definition of $\P_{\rho}$ and the connection with the standard Cauchy variable, which was observed  by Pitman and Yor in \cite{PY}.  

\subsection{Proof of Corollary \ref{cor:sr}}
In order to prove the identity \eqref{id-Doney},  we first recall the connection between the law of the maximum of a stable process and the exponential functional of a specific  L\'evy process, usually referred to as the Lamperti-stable process. This link has been established through the so-called Lamperti transform and we refer to \cite{CC}, \cite{KP13} and  \cite[Section 2.2]{Kypri} for more details. We proceed by providing the L\'evy-Khintchine   exponent  $\Psi_{\alpha,\rho}$ of the Lamperti-stable process of parameters $(\alpha,\rho),  \alpha \in (0, 2)$ and $\rho \in (0,1)$,  which is given by
\begin{equation} \label{eq: LE examples A Doney}
\Psi_{\alpha,\rho}(z) = - \frac{\Gamma(1+\alpha z)}{\Gamma(1-\alpha \rho+\alpha z)} \frac{\Gamma(\alpha-\alpha z)}{\Gamma(\alpha \rho-\alpha z)}, \:  z \in \C_{(-\frac{1}{\alpha},1)},
\end{equation}
see \cite[Theorem 2.3]{Kypri} where we consider here the exponent of $\alpha \xi^*$ in the notation of that paper.
The following identity in law between the suprema of stable processes and the exponential functional of L\'evy processes can be found, for example,  in \cite[p.~133]{KP13},
\begin{equation} \label{eq: IDiL EF Sub stable}
	         \M_{\rho}^{-\alpha} \stackrel{\left(d\right)}{=}{\rm{I}}_{\Psi_{\alpha,\rho}}
	      \end{equation}
where we recall that $\M_{\rho}=\sup_{0\leq t \leq 1}X_{t} $ and $X=\left(X_{t}\right)_{t\geq0}$ is an $\alpha$ stable process with positivity parameter $\rho$. Next, observe  that $\Psi_{\alpha,\rho}(\rho)=0$ and using the recurrence relation of the gamma function, easy algebra yields
 \begin{eqnarray*}
  \frac{z}{z+1}\Psi_{\alpha,\rho}(z+1)&=& -\frac{z}{z+1}\frac{\Gamma(1+\alpha +\alpha z)}{\Gamma(1+\alpha(1- \rho)+\alpha z)} \frac{\Gamma( -\alpha z)}{\Gamma(-\alpha(1- \rho)-\alpha z)}\\
  &=&-\frac{\Gamma(\alpha +\alpha z)}{\Gamma(\alpha(1- \rho)+\alpha z)} \frac{\Gamma(1-\alpha z)}{\Gamma(1-\alpha(1- \rho)-\alpha z)}.
\end{eqnarray*}
Then, we get that $\lim_{u\to 0}\frac{u}{u+1}\Psi_{\alpha,\rho}(u+1)=-\frac{\Gamma(\alpha)}{\Gamma(\alpha(1- \rho))} \frac{1}{\Gamma(1-\alpha(1- \rho))}\leq 0$ as always $\alpha(1- \rho)\leq 1$.
We could easily check from the form  \eqref{eq: LE examples A Doney} of $\Psi_{\alpha,\rho}$ and the expression of its L\'evy measure given in \cite{KP13} that $y\mapsto e^{y} \overline{\Pi}_{+}\left(y\right)$ is non-decreasing on $\R^+$. Instead, we simply observe from the computation above that
\begin{equation*}
 \widehat{\Psi}_{1}(z) = \mathcal{T}_{1} \Psi_{\alpha,\rho} (-z) = 	 - \frac{\Gamma(\alpha -\alpha z)}{\Gamma(\alpha(1- \rho)-\alpha z)} \frac{\Gamma(1+\alpha z)}{\Gamma(1-\alpha(1- \rho)+\alpha z)}=\Psi_{\alpha,1-\rho}(z),
\end{equation*}
which is the characteristic exponent of the Lamperti-stable process with parameter $(\alpha, 1-\rho)$.
Hence,  similarly to \eqref{eq: IDiL EF Sub stable}, we have the following identity in law
\begin{equation*}
	         \widehat{\M}_{1}^{-\alpha} \stackrel{\left(d\right)}{=} {\rm{I}}_{\widehat{\Psi}_{1}},
	      \end{equation*}
which by an application of Theorem \ref{mainthm} completes the proof.

\subsection{Proof of Corollary \ref{cor:s2}}
Let us now consider, for any $\alpha \in (0,1)$,
 \begin{equation*}
\widehat\Psi_{\alpha}(z) = \frac{\Gamma(1+\alpha-\alpha z)}{\alpha \Gamma(-\alpha z)}, \ z\in  \C_{(-\infty,1+\frac{1}{\alpha})}.
\end{equation*}
In \cite[Section 3.1]{P11}, it is shown that $\widehat\Psi_{\alpha}$ is  the L\'evy-Khintchine exponent of a spectrally positive L\'evy process with a negative mean  and that ${\rm{I}}_{\widehat\Psi_{\alpha}}$ is a positive self-decomposable variable with
\[{\rm{I}}_{\widehat\Psi_{\alpha}}\stackrel{(d)}{=} \e^{-\alpha}. \]
 In other words, ${\widehat{\rm{I}}}_{\Psi_{\alpha}}$  has the Fr\'echet distribution of parameter $\rho=\frac1\alpha>1$.
 Observe that  $\widehat\Psi_{\alpha}(\rho)=0$ and thus $\widehat\Psi_{\alpha}$ does not satisfy the hypothesis of Theorem \ref{mainthm}. However, in \cite[Section 3.1]{P11} it is also shown that, up to a positive multiplicative constant, the tail of the L\'evy measure of $\widehat\Psi_{\alpha}$ is given by $\overline{\Pi}_{\alpha}(y)=e^{-(\alpha+1)y/\alpha}(1 - e^{-
y/\alpha})^{-\alpha-1}, y>0,$ and thus plainly the mapping  $e^{\beta y}\overline{\Pi}_{\alpha}(y)$ is non-decreasing on $\R^+$ for any $\beta \leq \frac1\alpha+1 $. Then, Lemma \ref{lem:par} gives, for any $\rho=\beta -\frac1\alpha \leq 1 $, that
\begin{equation*}
\widehat\Psi_{\alpha,\rho}(z)=\mathcal T_{\rho +\frac1\alpha} \widehat\Psi_{\alpha}(z)= \frac{z}{z+\rho +\frac1\alpha}  \frac{\Gamma(\alpha-\alpha\rho -\alpha z)}{\alpha\Gamma(-1-\alpha\rho -\alpha z)} =- z  \frac{\Gamma(\alpha-\alpha\rho -\alpha z)}{\Gamma(-\alpha\rho -\alpha z)}
\end{equation*}
is, since $\lim_{u \to 0}\widehat\Psi_{\alpha,1}(u)=\lim_{u \to 0} \frac{\Gamma(1 -\alpha u)}{\alpha\Gamma(-\alpha -\alpha u)} =-\frac{1}{\Gamma(1-\alpha)}<0$,  the characteristic exponent of a L\'evy process, as well as, by duality,
\[  \Psi_{\alpha,\rho}(z) =  z  \frac{\Gamma(\alpha-\alpha\rho +\alpha z)}{\Gamma(-\alpha\rho +\alpha z)}. \]
Note that, if $0<\rho< 1$ then $\Psi_{\alpha,\rho}(\rho)=0$ and we deduce by convexity and since $ \Psi_{\alpha,\rho}(0)=0$ that $ \Psi_{\alpha,\rho}'(0^+)<0$. Hence $ \Psi_{\alpha,\rho} \in \mathcal{N}_1(\rho)$.   Using Lemma \ref{lem:recef}, one gets that
\begin{equation}\label{eq:fexpex}
 \mathcal{M}_{{\rm{I}}_{ \Psi_{\alpha,\rho}}}(z+1)=-\frac{\Gamma(\alpha-\alpha\rho +\alpha z)}{\Gamma(-\alpha\rho +\alpha z)} \mathcal{M}_{{\rm{I}}_{ \Psi_{\alpha,\rho}}}(z), \quad  \mathcal{M}_{{\rm{I}}_{\Psi_{\alpha,\rho}}}(1)=1.
\end{equation}
As mentioned earlier, \cite[Theorem 2.1]{Patie-Savov-Bern} provides the solution of this functional equation which is derived as follows. First we recall that the analytical Wiener-Hopf factorization of $ \Psi_{\alpha,\rho}$ is given by
\begin{equation*}
 \Psi_{\alpha,\rho}(z)=-(-z+\rho)\phi_{\rho}(z)
\end{equation*}
where $\phi_{\rho}(z)=  \alpha z  \frac{\Gamma(\alpha-\alpha\rho +\alpha z)}{\Gamma(1-\alpha\rho +\alpha z)}$ is a Bernstein function, see \cite{Dbook}. Then,  the solution of \eqref{eq:fexpex}  takes the form
\begin{equation*}
 \mathcal{M}_{{\rm{I}}_{ \Psi_{\alpha,\rho}}}(z+1)=\frac{\Gamma(z+1)}{W_{\phi_{\rho}}(z+1)}\Gamma(\rho - z),
\end{equation*}
where $W_{\phi_{\rho}}(z+1)= \phi_{\rho}(z)W_{\phi_{\rho}}(z),\: W_{\phi_{\rho}}(1)=1.$ To solve this latter recurrence equation, we note that $\phi_{\rho}(z)=  \alpha  \frac{z}{z-\rho}  \frac{\Gamma(\alpha(z+1-\rho))}{\Gamma(\alpha(z-\rho))}$, then easy algebra and the uniqueness argument used in the proof of Lemma \ref{lem:par} yield that $W_{\phi_{\rho}}(z+1)= \alpha^z \frac{\Gamma(1-\rho)\Gamma(z+1)\Gamma(\alpha(z+1-\rho))}{\Gamma(z+1-\rho)\Gamma(\alpha(1-\rho))}$ and thus
\begin{eqnarray*}
 \mathcal{M}_{{\rm{I}}_{ \Psi_{\alpha,\rho}}}(z+1)&=&\alpha^{-z} \frac{\Gamma(z+1-\rho)\Gamma(\alpha(1-\rho))}{\Gamma(1-\rho)\Gamma(z+1)\Gamma(\alpha(z+1-\rho))}\Gamma(z+1)\Gamma(\rho - z) \\
 &=&\alpha^{-z} \frac{\Gamma(z+1-\rho))}{\Gamma(\alpha(z+1-\rho))}\frac{\Gamma(\alpha(1-\rho))}{\Gamma(1-\rho)}\Gamma(\rho - z).
\end{eqnarray*}
Next, recalling that $S_{\gamma}(\alpha)$ is the $\gamma$-length biased variable of a positive $\alpha$-stable random variable, we observe, from \cite[Section 3(3)]{P11} that
  \begin{equation*}
\E\left[ S_{1-\rho}^{-\alpha z}(\alpha)\right] = \frac{\E\left[S^{-\alpha (1-\rho+z)}(\alpha) \right]}{ \E\left[S^{-\alpha(1-\rho)}(\alpha)\right]}= \frac{\Gamma(z+1-\rho))}{\Gamma(\alpha(z+1-\rho))}\frac{\Gamma(\alpha(1-\rho))}{\Gamma(1-\rho)}.
\end{equation*}
Thus, by Mellin transform identification, we get that
\begin{equation*}
	          {\rm{I}}_{{\Psi}_{\alpha,\rho}} \stackrel{\left(d\right)}{=} \alpha^{-1} S_{1-\rho}^{-\alpha }(\alpha) \times \G^{-1}_{1-\rho}
	      \end{equation*}
where the variables on the right-hand side are taken independent.
Next, we have, writing simply $\Psi_{1}=\mathcal T_1\Psi_{\alpha,\rho}$,
\[
\mathcal T_1\Psi_{\alpha,\rho}(z) =  z  \frac{\Gamma(\alpha(2-\rho +z))}{\Gamma(\alpha(1-\rho+ z))} \]
which yields
\begin{equation*}
 \mathcal{M}_{{\rm{I}}_{ \widehat \Psi_{1}}}(z+1)=\frac{\Gamma(\alpha(1-\rho- z))}{\Gamma(\alpha(2-\rho -z))}  \mathcal{M}_{{\rm{I}}_{\widehat \Psi_{1}}}(z), \quad  \mathcal{M}_{\widehat \Psi_{1}}(1)=1.
\end{equation*}
 It is not difficult to check that $\mathcal{M}_{{\rm{I}}_{ \widehat \Psi_{1}}}(z+1)=\Gamma(\alpha(1-\rho- z))$ is the unique positive-definite solution of this equation. Invoking Theorem \ref{mainthm} completes the proof.

\bibliographystyle{abbrv}



\end{document}